\newcommand{\bbox}{\rule{0.6em}{0.6em}}
\newcommand{\boxi}{\ensuremath{box}}
\newcommand{\cubi}{\ensuremath{cub}}
\begin{document}
\date{}
\pagestyle{plain}
\title{Cubicity, Degeneracy, and Crossing Number}
\author{Abhijin Adiga \and L. Sunil Chandran \and Rogers Mathew}
\institute{Department of Computer Science and Automation, \\ Indian Institute of
Science, \\
Bangalore - 560012, India. \\ \email{\{abhijin,sunil,rogers\}@csa.iisc.ernet.in}} 
\maketitle
\bibliographystyle{plain}
\begin{abstract}
A $k$-box $B=(R_1,\ldots,R_k)$, where each $R_i$ is a closed interval on
the real line, is defined to be the Cartesian product $R_1\times R_2\times
\cdots\times R_k$. If each $R_i$ is a unit length interval, we call $B$ a $k$-cube. 
\emph{Boxicity} of a graph $G$, denoted as $\boxi(G)$, is the minimum integer $k$
such that $G$ is an intersection graph of $k$-boxes. Similarly, the \emph{cubicity}
of $G$, denoted as $\cubi(G)$, is the minimum integer $k$ such that $G$ is
an intersection graph of $k$-cubes. 

It was shown in [L. Sunil Chandran, Mathew C. Francis, and Naveen Sivadasan:  
Representing graphs as the intersection of axis-parallel cubes. {\em MCDES-2008, IISc Centenary Conference}, available at {\em CoRR}, abs/cs/\\0607092, 2006.] that, for a graph $G$ with maximum degree $\Delta$, 
$\cubi(G)\leq \lceil 4(\Delta +1)\log n\rceil$. In this paper, we show that, for a $k$-degenerate 
graph $G$, $\cubi(G) \leq (k+2)  \lceil 2e \log n \rceil$. Since $k$ is at most 
$\Delta$ and can be much lower, this clearly is a stronger result. This bound is tight.  We   
also give an efficient deterministic algorithm that runs in 
$O(n^2k)$ time to output a 
$8k(\lceil 2.42 \log n\rceil + 1)$ dimensional cube representation for $G$. 

The \emph{crossing number} of a graph $G$, denoted as $CR(G)$, 
is the minimum number of crossing pairs of edges, over all drawings of $G$ in the plane. 
An important consequence of the above result is that 
if the crossing number of a graph $G$ is $t$, then 
$\boxi(G)$ is $O(t^{\frac{1}{4}}{\lceil\log t\rceil}^{\frac{3}{4}})$ . This bound is 
tight up to a factor of $O((\log t)^{\frac{1}{4}})$.  We also show that, if $G$ has $n$ vertices, then $\cubi(G)$ is $O(\log n + t^{1/4}\log t)$. 

Let $(\mathcal{P},\leq)$ be a partially ordered set and let $G_{\mathcal{P}}$ denote 
its underlying comparability graph. Let $dim(\mathcal{P})$ denote the \emph{poset 
dimension} of $\mathcal{P}$. 
Another interesting consequence of our result is to show that 
$dim(\mathcal{P}) \leq 2(k+2)  \lceil 2e \log n \rceil$, where $k$ denotes the 
degeneracy of $G_{\mathcal{P}}$. 
Also, we get a deterministic algorithm that runs in $O(n^2k)$ time to construct a 
$16k(\lceil 2.42 \log n\rceil + 1)$ sized realizer for $\mathcal{P}$. 
As far as we know, though very good upper bounds exist for poset dimension in terms 
of maximum degree of its underlying comparability graph, no upper bounds in terms of 
the degeneracy of the underlying comparability graph is seen in the literature. 

It was shown in [L. Sunil Chandran, Mathew C. Francis, and Naveen Sivadasan: Geometric Representation of Graphs in Low Dimension Using 	Axis Parallel Boxes. Algorithmica 56(2): 129-140, 2010.] that boxicity of almost all graphs in $\mathcal{G}(n,m)$ model is $O(d_{av}\log n)$, where $d_{av} = \frac{2m}{n}$ denotes the average degree of the graph under consideration. 
In this paper, we prove a stronger result. Using our bound for the cubicity of $k$-degenerate graphs, we show that cubicity of almost all graphs in $\mathcal{G}(n,m)$ model is $O(d_{av}\log n)$. 
\\
\noindent\textbf{Keywords: }
Degeneracy, Cubicity, Boxicity, Crossing Number, Interval Graph, 
Intersection Graph, Poset Dimension, Comparability Graph, random graph, average degree 
\end{abstract}

\section{Introduction}
A graph $G$ is an \emph{intersection graph} of sets from a family of sets
$\mathcal{F}$, if there exists $f:V(G)\rightarrow \mathcal{F}$ such that
$(u,v)\in E(G)\Leftrightarrow f(u)\cap f(v)\not=\emptyset$.
Representations of graphs as the intersection graphs of various geometrical
objects is a well studied topic in graph theory. Probably the most well studied
class of intersection graphs are the \emph{interval graphs}. 
Interval graphs are the intersection graphs of closed intervals on the real line. 
A restricted form of interval graphs,
that allow only intervals of unit length, are \emph{indifference graphs} or 
\emph{unit interval graphs}.

An interval
on the real line can be generalized to a ``$k$-box'' in $\mathbb{R}^k$.
A $k$-box $B=(R_1,\ldots,R_k)$, where each $R_i$ is a closed interval on
the real line, is defined to be the Cartesian product $R_1\times R_2\times
\cdots\times R_k$.
If each $R_i$ is a unit length interval, we call $B$ a $k$-cube.
Thus, 1-boxes are just closed intervals on the real line whereas 2-boxes
are axis-parallel rectangles in the plane. 
The parameter
boxicity of a graph $G$, denoted as $\boxi(G)$, is the minimum integer $k$
such that $G$ is an intersection graph of $k$-boxes. Similarly, the cubicity
of $G$, denoted as $\cubi(G)$, is the minimum integer $k$ such that $G$ is
an intersection graph of $k$-cubes. Thus, interval graphs are the graphs
with boxicity equal to 1 and unit interval graphs are the graphs with
cubicity equal to 1. 
A \emph{$k$-box representation} or a \emph{$k$ dimensional box representation} 
of a graph $G$ is a mapping of the vertices of $G$ to
$k$-boxes such that  two vertices in $G$ are adjacent if and only if their
corresponding $k$-boxes have a non-empty intersection. In a similar way, 
we define \emph{$k$-cube representation} (or \emph{$k$ dimensional cube representation} ) of a graph $G$.  
Since $k$-cubes by definition are also 
$k$-boxes, boxicity of a graph is at most its cubicity. 

The concepts of boxicity and cubicity were introduced by F.S. Roberts in 1969 \cite{Roberts}. 
Roberts showed that for any graph $G$ on $n$ vertices
$\boxi(G)\leq \lfloor \frac{n}{2}\rfloor $ and $\cubi(G)\leq \lfloor \frac{2n}{3} \rfloor$. 
Both these bounds are tight since
$\boxi(K_{2,2,\ldots,2})=\lfloor \frac{n}{2}\rfloor$ and $\cubi(K_{3,3,\ldots,3})=
\lfloor \frac{2n}{3} \rfloor$ where
$K_{2,2,\ldots,2}$ denotes the complete $n/2$-partite graph with 2 vertices
in each part and $K_{3,3,\ldots,3}$ denotes the complete $n/3$-partite graph
with 3 vertices in each part. 
It is easy to see that the boxicity of any graph is at least the boxicity
of any induced subgraph of it. 

Box representation of graphs finds application in niche overlap (competition) in
ecology and to problems of fleet maintenance in operations research
(see \cite{CozRob}). Given a low dimensional box representation,  
some well known NP-hard problems become polynomial time solvable. 
For instance, the max-clique problem is polynomial time solvable for graphs with 
boxicity $k$ because the number of maximal cliques in such graphs is only 
$O((2n)^k)$.  
\subsection{Previous Results on Boxicity and Cubicity}
\label{secboxresults}
It was shown by Cozzens \cite{Coz} that computing the boxicity of a graph
is \textbf{NP}-hard. 
Kratochv\'{\i}l \cite{Kratochvil} 
showed that deciding whether the boxicity of a graph is at most 2 itself 
is \textbf{NP}-complete. It has been shown by Yannakakis \cite{Yan1} that deciding whether the 
cubicity of a given graph is at least 3 is \textbf{NP}-hard. 

Researchers have tried to bound the boxicity and cubicity of 
graph classes  with special structure. Scheinerman \cite {Scheiner} 
showed that the boxicity of outerplanar graphs is at most $2$.
Thomassen \cite {Thoma1} proved that the boxicity of planar graphs is
bounded from above by $3$. Upper bounds for the boxicity of many other graph
classes such as chordal graphs, AT-free graphs, permutation graphs etc.
were shown in \cite{CN05} by relating the boxicity of a graph with its
treewidth. The cube representation of special classes of graphs like hypercubes and
complete multipartite graphs were investigated in \cite{Roberts,Maehara,Quint}. 

Various other upper bounds on
boxicity and cubicity in terms of graph parameters such as maximum degree, treewidth etc. 
can be seen in \cite{tech-rep,CFNfull06,CFNMaxdeg,Esperet,CN05}. 
The ratio of cubicity to boxicity of any graph on $n$ vertices was shown
to be at most $\lceil \log_2 n\rceil$ in \cite{CubBox}. 


\subsection{Equivalent Definitions for Boxicity and Cubicity}
Let $G$ and $G_1,...,G_b$ be graphs such that $V(G_i)=V(G)$ for $1\le i\le b$.
We say $G= \bigcap_{i=1}^{b}G_i$ 
when $E(G) = \bigcap_{i=1}^{b}E(G_i)$. 
Below, we state two very useful lemmas due to Roberts \cite{Roberts}.
\begin{lemma}
\label{robertslem}
For any graph $G$, $\boxi(G)\leq k$ if and only if there exist $k$ interval
graphs $I_1,\ldots,I_k$ such that $G=I_1\cap \cdots\cap I_k$.
\end{lemma}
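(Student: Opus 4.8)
The statement to prove is Lemma~\ref{robertslem}, a classical characterization due to Roberts: $\boxi(G)\leq k$ if and only if $G$ can be written as the intersection of $k$ interval graphs on the same vertex set. Let me sketch a proof.

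The plan is to prove both directions of the biconditional by exploiting the single geometric fact that drives everything: two axis-parallel $k$-boxes intersect in $\mathbb{R}^k$ if and only if their projections onto each of the $k$ coordinate axes intersect. Since a box is the Cartesian product $B=(R_1,\ldots,R_k)$, we have $B_u\cap B_v\neq\emptyset$ exactly when $R_i(u)\cap R_i(v)\neq\emptyset$ for every $i\in\{1,\ldots,k\}$. This coordinate-wise decomposition of box intersection is what lets us trade a $k$-box representation for $k$ interval representations and back.

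For the forward direction, I would start from a $k$-box representation of $G$, assigning each vertex $v$ the box $B_v=(R_1(v),\ldots,R_k(v))$ so that $(u,v)\in E(G)$ iff $B_u\cap B_v\neq\emptyset$. For each coordinate $i$, I would define an interval graph $I_i$ on $V(G)$ in which vertex $v$ is assigned the interval $R_i(v)$; by definition $(u,v)\in E(I_i)$ iff $R_i(u)\cap R_i(v)\neq\emptyset$. Applying the coordinate-wise intersection fact, $(u,v)\in E(G)$ iff the intervals meet in every coordinate, i.e. iff $(u,v)\in E(I_i)$ for all $i$, which is exactly $(u,v)\in\bigcap_{i=1}^{k}E(I_i)$. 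Hence $G=I_1\cap\cdots\cap I_k$ with each $I_i$ an interval graph.

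For the converse, I would reverse this construction. Given interval graphs $I_1,\ldots,I_k$ on $V(G)$ with $G=I_1\cap\cdots\cap I_k$, I would fix an interval representation of each $I_i$, letting $R_i(v)$ denote the interval assigned to $v$ in $I_i$, and then glue these together by assigning to each vertex $v$ the $k$-box $B_v=(R_1(v),\ldots,R_k(v))$. The same equivalence runs in reverse: $B_u\cap B_v\neq\emptyset$ iff the intervals meet in every coordinate iff $(u,v)\in E(I_i)$ for all $i$ iff $(u,v)\in E(G)$. Thus the boxes realize $G$, giving $\boxi(G)\leq k$.

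There is no serious obstacle here; the only point requiring care is to state and invoke the coordinate-wise intersection property cleanly, since it is the hinge of both implications. Everything else is bookkeeping, matching up the edge set $E(G)=\bigcap_i E(I_i)$ with the adjacency condition of the box representation.
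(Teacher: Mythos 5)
Your proposal is correct, and it is the canonical argument: the paper itself states this lemma without proof, citing it directly from Roberts \cite{Roberts}, so there is no in-paper proof to compare against. Your hinge fact---that axis-parallel boxes intersect if and only if their projections onto every coordinate axis intersect, since $B_u\cap B_v=(R_1(u)\cap R_1(v))\times\cdots\times(R_k(u)\cap R_k(v))$---is exactly what drives the standard proof, and both directions go through as you describe. One small point to tighten: $\boxi(G)\leq k$ only directly yields a $\boxi(G)$-box representation, so in the forward direction you should note that padding every box with a common interval (say $[0,1]$) in the remaining $k-\boxi(G)$ coordinates produces a genuine $k$-box representation without changing any adjacencies.
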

\begin{lemma}
\label{lemrobertscub}
For any graph $G$, $\cubi(G)\leq k$ if and only if there exist $k$ indifference
graphs (unit interval graphs) $I_1,\ldots,I_k$ such that $G=I_1\cap\cdots
\cap I_k$.
\end{lemma}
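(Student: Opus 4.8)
The plan is to mirror the proof of Lemma~\ref{robertslem}, specializing the argument from general $k$-boxes to $k$-cubes by insisting that every coordinate interval have unit length. The key observation, exactly as in the boxicity case, is that two axis-parallel $k$-boxes (in particular $k$-cubes) intersect in $\mathbb{R}^k$ if and only if their projections onto each of the $k$ coordinate axes intersect. This lets us pass freely between a single $k$-dimensional cube representation and a $k$-tuple of one-dimensional unit-interval representations, and the whole proof reduces to checking that the unit-length constraint survives the translation in both directions.

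For the forward direction, suppose $\cubi(G)\leq k$, so $G$ admits a $k$-cube representation assigning to each vertex $v$ a $k$-cube $C_v=(R_1^v,\ldots,R_k^v)$ with each $R_i^v$ a closed unit-length interval. For $1\le i\le k$ I would define a graph $I_i$ on $V(G)$ by the assignment $v\mapsto R_i^v$, declaring $u,v$ adjacent in $I_i$ precisely when $R_i^u\cap R_i^v\neq\emptyset$. Since every $R_i^v$ has unit length, each $I_i$ is by definition an indifference graph. Because $C_u\cap C_v\neq\emptyset$ exactly when $R_i^u\cap R_i^v\neq\emptyset$ for all $i$, we obtain $(u,v)\in E(G)$ iff $(u,v)\in E(I_i)$ for every $i$, i.e.\ $E(G)=\bigcap_{i=1}^{k}E(I_i)$, which is the desired $G=I_1\cap\cdots\cap I_k$.

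For the converse, assume $G=I_1\cap\cdots\cap I_k$ with each $I_i$ an indifference graph. By definition each $I_i$ has a unit-interval representation, i.e.\ an assignment $f_i$ of a closed unit-length interval to every vertex such that $(u,v)\in E(I_i)\Leftrightarrow f_i(u)\cap f_i(v)\neq\emptyset$. I would assemble these coordinatewise into the $k$-cube $C_v=(f_1(v),\ldots,f_k(v))$ for each $v$; since each coordinate is a unit interval, $C_v$ is a genuine $k$-cube rather than a general $k$-box. As intersection of the products is coordinatewise, $C_u\cap C_v\neq\emptyset$ iff $f_i(u)\cap f_i(v)\neq\emptyset$ for all $i$ iff $u,v$ are adjacent in every $I_i$ iff $(u,v)\in E(G)$. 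Hence $\{C_v\}_{v\in V(G)}$ is a valid $k$-cube representation of $G$, so $\cubi(G)\leq k$.

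I do not expect a genuine obstacle here: both directions are bookkeeping built on the coordinatewise intersection property of axis-parallel boxes together with the definitional equivalence between indifference graphs and unit-length interval representations. The only point that requires care is to verify that the unit-length constraint is preserved throughout — that projecting a cube yields unit intervals in the forward direction, and that reassembling unit intervals yields cubes (not merely boxes) in the converse — and both of these are immediate from the definitions, which is precisely why the statement is the exact cube-analogue of Lemma~\ref{robertslem}.
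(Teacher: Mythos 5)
Your proof is correct: the paper itself gives no proof of this lemma (it simply cites Roberts), and your coordinatewise projection/assembly argument is exactly the standard one underlying that classical result. The only point you gloss over is that $\cubi(G)\leq k$ a priori gives a $j$-cube representation for some $j\leq k$, so one pads with $k-j$ dummy coordinates (e.g.\ assigning $[0,1]$ to every vertex) to get exactly $k$ unit interval graphs --- a triviality, not a gap.
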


\subsection{Our Results}
A graph $G$ is \emph{$k$-degenerate} if the vertices of $G$ can be 
enumerated in such a way that every vertex is succeeded by at most $k$ of its neighbors. 
The least number $k$ such that $G$ is $k$-degenerate is called the degeneracy of $G$ and 
any such enumeration is referred to as a \emph{degeneracy order} of $V(G)$. For example, 
trees and forests are 1-degenerate and planar graphs are 5-degenerate. 
Series-parallel graphs, outerplanar graphs, non-regular cubic graphs, 
circle graphs of girth at least 5 etc. are subclasses of 2-degenerate graphs. 

\textbf{Main Result: }It was shown in \cite{CFNfull06} that, for a graph $G$ with maximum degree $\Delta$, 
$\cubi(G)\leq \lceil 4(\Delta +1)\log n\rceil$. In this paper, we show that, for a $k$-degenerate 
graph $G$, $\cubi(G) \leq (k+2)  \lceil 2e \log n \rceil$. Since $k$ is at most 
$\Delta$ and can be much lower, this clearly is a stronger result. We prove that this bound is tight. Moreover, 
we give an \emph{efficient deterministic algorithm} that outputs a 
$8k(\lceil 2.42 \log n\rceil + 1)$ dimensional cube representation for $G$ in 
$O(n^2k)$ time. 

\textbf{Consequence 1: }The \emph{crossing number} of a graph $G$, denoted as $CR(G)$, 
is the minimum number of crossing pairs of edges, over all drawings of $G$ in the plane. 
We prove that, if $CR(G) = t$, then 
$\boxi(G) \leq 66t^{\frac{1}{4}}{\lceil\log 4t\rceil}^{\frac{3}{4}} + 6$. This bound is 
tight up to a factor of $O((\log t)^{\frac{1}{4}})$. We also show that, if $G$ has $n$ vertices, then $\cubi(G)$ is $O(\log n + t^{1/4}\log t)$. See Section \ref{CrossingSection} for details.  

\textbf{Consequence 2: }It was shown in \cite{tech-rep} that boxicity of almost all graphs in $\mathcal{G}(n,m)$ model is $O(d_{av}\log n)$, where $d_{av} = \frac{2m}{n}$ denotes the average degree of the graph under consideration. What can we infer about the cubicity of almost all graphs from the result of \cite{tech-rep}? It was shown in \cite{CubBox} that for every graph $G$, $\cubi(G) \leq \log_2n \times \boxi(G)$. Combining this result with that of \cite{tech-rep}, we can infer that cubicity of almost all graphs is $O(d_{av} \log^2n)$. 
In this paper, we prove a stronger result. Using our bound for the cubicity of $k$-degenerate graphs, we show that cubicity of almost all graphs in $\mathcal{G}(n,m)$ model is $O(d_{av}\log n)$. See Section \ref{CubicityRandGraph} for details. 

\textbf{Consequence 3: }Let $(\mathcal{P}, \leq)$ be a poset (partially ordered set) and 
let $G_{\mathcal{P}}$ be the underlying comparability graph of $\mathcal{P}$. A linear extension 
$L$ of $\mathcal{P}$ is a total order which satisfies $(x \leq y \in \mathcal{P}) \implies 
(x \leq y \in L)$. A realizer of $\mathcal{P}$ is a set of linear extensions of $\mathcal{P}$, 
say $\mathcal{R}$, which satisfy the following condition: for any two distinct elements $x$ and $y$, 
$x\leq y$ in $\mathcal{P}$ if and only if $x \leq y$ in $L$, $\forall L \in \mathcal{R}$.  
The \emph{poset dimension} of $\mathcal{P}$, denoted by $dim(\mathcal{P})$, 
is the minimum integer $k$ such that there exists a realizer of $\mathcal{P}$ of cardinality $k$. 
Yannakakis \cite{Yan1} showed that it is NP-complete to decide whether the dimension of a poset is at most 3.  
The poset dimension is an extensively studied parameter in the theory of 
partial order (See \cite{trotter2001combinatorics} for a comprehensive treatment).

There are several research papers in the partial order 
literature which study the dimension of posets whose underlying 
comparability graph has some special structure -- 
interval order, semi order and crown posets are some examples. 
While very good upper bounds (for example $c\Delta (\log \Delta)^2$, where $c$ is a constant) are known 
for poset dimension in terms of maximum degree $\Delta$ of its underlying comparability graph, 
as far as we know there are no upper bounds in terms of the degeneracy of the 
underlying comparability graph. Connecting our main result with a result in 
\cite{DiptAdiga}, we can get an upper bound for poset dimension in terms of the 
degeneracy of the underlying  comparability graph as follows. 
It was shown in \cite{DiptAdiga} that $dim(\mathcal{P}) < 2\boxi(G_{\mathcal{P}})$. Therefore, 
if the degeneracy of the underlying comparability graph $G_{\mathcal{P}}$ is $k$, 
then our result says that $dim(\mathcal{P}) \leq 2(k+2)  \lceil 2e \log n \rceil$. Also, 
we get a deterministic algorithm that runs in $O(n^2k)$ time to construct a 
$16k(\lceil 2.42 \log n\rceil + 1)$ sized realizer for $\mathcal{P}$. 


\section{Preliminaries}
For any finite positive integer $n$, let $[n]$ denote the 
set $\{1,\ldots n\}$. Unless mentioned explicitly, all logarithms 
are to the base $e$ in this paper. 
All the graphs that we consider are simple, finite and undirected.
For a graph $G$, we denote the vertex set of $G$ by $V(G)$ and the
edge set of $G$ by $E(G)$. For any vertex $u \in V(G)$, 
$N_G(u) = \{v \in V(G)~|~(u,v) \in E(G)\}$. We define $d_{G}(u) :=
|N_G(u)|$. The average degree of $G$ is denoted by $d_{av}(G)$. 

Consider a graph $G$ whose vertices are partitioned into two parts, namely $V_A$ and $V_B$. That is, $V(G) = V_A \uplus V_B$. 
We shall use $S_B(G)$ to denote the graph with $V(S_B(G)) = V(G)$ and 
$E(S_B(G)) = E(G) \setminus \{(u,v)~|~u,v \in V_B\}$. In other words, $S_B(G)$ 
is obtained from $G$ by making $V_B$ a stable set (or an independent set). Let $C_B(G)$ denote the graph with $V(C_B(G)) = V(G)$ and 
$E(C_B(G)) = E(G) \cup \{(u,v)~|~u,v \in V_B\}$. That is, $C_B(G)$ 
is obtained from $G$ by making $V_B$ a clique. Let $G_B$ denote the subgraph 
of $G$ induced on $V_B$. Analogously, we define $S_A(G), C_A(G)$, and $G_A$. 

Since an interval graph is the intersection graph of closed intervals on the
real line, for every interval graph $I_a$, there exists a function
$f_a:V(I_a)\rightarrow \{X\subseteq\mathbb{R}~|~X\mbox{ is a closed interval}\}$,
such that  $\forall u,v\in V(I_a)$, $(u,v)\in E(I_a)\Leftrightarrow f_a(u)\cap
f_a(v)\not=\emptyset$.
The function $f_a$ is called an \emph{interval representation} of the interval
graph $I_a$. Note that the interval representation of an interval graph 
need not be unique. In a similar way, we call a function $f_b$ a \emph{unit interval representation} 
of unit interval graph $I_b$ if $f_b:V(I_b)\rightarrow 
\{X'\subseteq\mathbb{R}~|~X'\mbox{ is a unit length closed interval}\}$,
such that $\forall u,v\in V(I_b)$, $(u,v)\in E(I_b)\Leftrightarrow f_b(u)\cap
f_b(v)\not=\emptyset$. Given a closed interval $X=[y,z]$, we define $l(X):=y$ and $r(X):=z$. We say that
the interval $X$ has \emph{left end-point} $l(X)$ and \emph{right end-point}
$r(X)$.

Given a graph $G$, a \emph{coloring} $\mathcal{C}$ of $V(G)$ using colors $\chi_1, \ldots , \chi_a$ is a map $\mathcal{C}:V(G)\rightarrow \{\chi_1, \ldots , \chi_a\}$. For each $u \in V(G)$, we shall use $\mathcal{C}(u)$ to denote the color of $u$ in $\mathcal{C}$. 
\subsubsection{Definitions, Notations and Assumptions used in 
Sections \ref{CubRepandColoring} and \ref{CubandDegeneracy}:} Recall that the 
degeneracy of a graph is the least number $k$ such that it has a 
vertex enumeration in which each vertex is succeeded by at most $k$ of its neighbors. 
Such an enumeration is called the degeneracy order. 
The graph $G$ that 
we consider in these sections is a $k$-degenerate graph having $V(G) = \{v_1,\ldots  ,v_n\}$, 
$|E(G)| = m$ and $\overline{m}$ ($= {n\choose 2} - m$) denotes the number of non-edges in $G$. 
The enumeration $v_1, \ldots , v_n$ 
is a degeneracy order of $V(G)$ and is denoted by $ \mathcal{D}$. For every $v_i,v_j \in V(G)$, 
we say $ v_i <_{\mathcal{D}} v_j$ if $v_i$ comes before $v_j$ in $\mathcal{D}$ i.e., 
$v_i <_{\mathcal{D}} v_j$ if and only if $i<j$. 
Suppose $v_i <_{\mathcal{D}} v_j$. If $(v_i,v_j) \in E(G)$, then we call $v_j$ a \emph{forward neighbor} of $v_i$ and $v_i$ 
is referred to as a \emph{backward neighbor} of $v_j$. Observe that since $G$ is $k$-degenerate, 
a vertex can have at most $k$ forward neighbors. If $(v_i,v_j) \notin E(G)$, 
then $v_j$ a \emph{forward non-neighbor} of $v_i$ and $v_i$ 
is a \emph{backward non-neighbor} of $v_j$.
For any $u \in V(G)$, 
$N_G^f(u) = \{w \in V(G)~|~w \mbox{ is a forward neighbor of } u\}$ and 
$N_G^b(u) = \{w \in V(G)~|~w \mbox{ is a backward neighbor of } u\}$. 
\\
\textbf{Support sets of a non-edge: }For each $(v_x,v_y) \notin E(G)$, where $v_x<_{\mathcal{D}} v_y$, let 
$S_{xy} = \{v_z \in N_G^f(v_x)~|~ v_y <_{\mathcal{D}} v_z\} \cup \{v_y\}$. We call 
$S_{xy}$ the \emph{weak support set} of the non-edge $(v_x,v_y)$. 
Define $T_{xy} = S_{xy} \cup \{v_x\}$. We call 
$T_{xy}$ the \emph{strong support set} of the non-edge $(v_x,v_y)$. 
Let $\mathcal{C}$ be a coloring (need not be proper) of $V(G)$. 
We say $S_{xy}$ is \emph{favorably colored} in $\mathcal{C}$, if $\mathcal{C}(v_y) 
\neq 
\mathcal{C}(v_w)$, $\forall v_w \in S_{xy}\setminus \{v_y\}$. 
We say $T_{xy}$ is \emph{favorably colored} in $\mathcal{C}$, if $\mathcal{C}(v_y) \neq 
\mathcal{C}(v_w)$, $\forall v_w \in T_{xy}\setminus \{v_y\}$

\section{Cube Representation and Coloring}
\label{CubRepandColoring}
\begin{lemma}
\label{coloringAndCubiLemma}
Let $G$ be a $k$-degenerate graph. 
Let $\chi = \{\chi_1, \ldots \chi_a\}$ be a set of 
colors and let $\mathbb{C} = \{\mathcal{C}_1, \ldots , \mathcal{C}_b\}$ be a 
family of colorings (need not be proper) of $V(G)$, where each $\mathcal{C}_i$ 
uses colors from the set $\chi$. 
If the strong support set $T_{xy}$ of every non-edge $(v_x,v_y) \notin E(G)$, 
$v_x <_{\mathcal{D}} v_y$, is favorably colored in some 
$\mathcal{C}_i$, where $i \in [b]$, then $\cubi(G) \leq ab$.
\end{lemma}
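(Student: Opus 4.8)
The plan is to invoke Lemma~\ref{lemrobertscub} and exhibit $ab$ indifference (unit interval) graphs, one for each pair $(i,j)\in[b]\times[a]$, whose intersection is exactly $G$. For each pair I will build a unit interval graph $I_{ij}$ that (i) contains every edge of $G$ and (ii) is missing every non-edge $(v_x,v_y)$, with $v_x<_{\mathcal{D}}v_y$, for which $\mathcal{C}_i(v_y)=\chi_j$ and $T_{xy}$ is favorably colored in $\mathcal{C}_i$. Granting such $I_{ij}$, the intersection bookkeeping is immediate: by (i) we have $E(G)\subseteq\bigcap_{i,j}E(I_{ij})$, while for any non-edge $(v_x,v_y)$ of $G$ the hypothesis supplies some $\mathcal{C}_i$ in which $T_{xy}$ is favorably colored; taking $\chi_j=\mathcal{C}_i(v_y)$, property (ii) deletes this pair from $I_{ij}$, so it survives in no intersection. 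Hence $\bigcap_{i,j}I_{ij}=G$, and Lemma~\ref{lemrobertscub} gives $\cubi(G)\le ab$.

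The heart of the argument is the construction of $I_{ij}$, which I give by an explicit unit interval representation. Write $\mathcal{C}=\mathcal{C}_i$ and, for $v\in V(G)$, let $\pi(v)$ denote the index of $v$ in $\mathcal{D}$ (so $\pi(v_p)=p$). For each non-$\chi_j$ vertex $v$ set $q(v)=\max\bigl(\{0\}\cup\{\pi(u)~|~u\in N_G(v),\ \mathcal{C}(u)=\chi_j\}\bigr)$, the index of the last $\chi_j$-colored neighbour of $v$ (in \emph{either} direction of $\mathcal{D}$). Placing each unit interval by its left endpoint $p(\cdot)$, I assign
\begin{equation*}
p(v)=\begin{cases}\dfrac{q(v)}{n+1}, & \mathcal{C}(v)\neq\chi_j,\\[1ex] 1+\dfrac{\pi(v)}{n+1}, & \mathcal{C}(v)=\chi_j,\end{cases}
\end{equation*}
and let $I_{ij}$ be the intersection graph of the unit intervals $[p(v),p(v)+1]$, so that $u$ and $w$ are adjacent exactly when $|p(u)-p(w)|\le 1$. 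The non-$\chi_j$ points then lie in $[0,1)$ and the $\chi_j$ points in $(1,2)$, the scaling $1/(n+1)$ serving only to order vertices inside a class by index without letting the intra-class spread reach $1$.

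It remains to verify (i) and (ii), and this is where I expect the real work to lie, since one must split by the colours of the two endpoints and confirm that the favorable-colouring hypothesis yields precisely the right inequalities. Any two non-$\chi_j$ vertices, and any two $\chi_j$ vertices, are automatically adjacent in $I_{ij}$ (their points sit within a common interval of length $1$), so no edge inside a colour class is lost; and no non-edge we must kill has this form, because the strong support condition forces $\mathcal{C}_i(v_x)\neq\mathcal{C}_i(v_y)$, putting $v_x$ and $v_y$ in different classes. The only substantive case is a $\chi_j$ vertex $w$ and a non-$\chi_j$ vertex $u$: a short computation shows they are adjacent in $I_{ij}$ iff $q(u)\ge\pi(w)$, i.e.\ $u$ has a $\chi_j$-coloured neighbour of index at least $\pi(w)$. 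For a genuine edge this always holds, since $w$ itself witnesses it; this is exactly where defining $q$ via neighbours in both directions matters, so that a forward edge out of a $\chi_j$ vertex is not accidentally destroyed. For a non-edge $(v_x,v_y)$ to be separated I need the reverse, $q(v_x)<\pi(v_y)$: a $\chi_j$-neighbour of $v_x$ of index $\ge\pi(v_y)$ would be a forward neighbour of $v_x$ lying beyond $v_y$, which the favorable colouring of $T_{xy}$ (hence of $S_{xy}$) forbids, while $v_y\notin N_G(v_x)$ rules out index exactly $\pi(v_y)$. It is precisely the extra requirement in the strong support set $T_{xy}$, that $v_x$ itself (not merely the forward neighbours of $v_x$ beyond $v_y$) avoids the colour of $v_y$, that confines every required separation to the single cross-class gap between $[0,1)$ and $(1,2)$ and thereby lets \emph{unit} intervals suffice, in contrast to the weaker condition on $S_{xy}$, which only yields arbitrary-length intervals.
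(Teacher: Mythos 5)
Your construction is the paper's own, up to an affine rescaling: your $\chi_j$-class intervals $[1+\pi(v)/(n+1),\,2+\pi(v)/(n+1)]$ and non-$\chi_j$ intervals $[q(v)/(n+1),\,q(v)/(n+1)+1]$ correspond exactly to the paper's $[y+n,\,y+2n]$ and $[g_{max}^{ij}(v_y),\,g_{max}^{ij}(v_y)+n]$ (the paper's $g_{max}^{ij}$ likewise ranges over neighbours in both directions), and your three verification steps (both colour classes are cliques, edges survive via the witness $q(u)\ge\pi(w)$, and non-edges with $T_{xy}$ favorably coloured are cut because a $\chi_j$-neighbour of $v_x$ beyond $v_y$ would violate favorability) mirror the paper's supergraph and claim arguments one for one. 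The proof is correct and takes essentially the same approach as the paper.
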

\begin{proof}
We prove this by constructing $ab$ unit interval graphs $I_{i,j}$ on the vertex set $V(G)$, 
where $i\in [a]$ and 
$j \in [b]$, such that $G = \bigcap_{i=1}^{a} \bigcap_{j=1}^{b} I_{i,j}$. Then the statement will 
follow from Lemma \ref{lemrobertscub}. Let $f_{i,j}$ denote 
a unit interval representation of $I_{i,j}$. 
Let us partition the vertices of $I_{i,j}$ into two parts, namely $A^{ij}$ and $B^{ij}$, 
where $A^{ij} = \{v \in V(G)~|~\mathcal{C}_i(v) = \chi_j\}$ and $B^{ij} = V(G) \setminus A^{ij}$.
For every $i \in [a]$ and $j \in [b]$, a unit interval representation 
$f_{i,j}$ of $I_{i,j}$ is constructed from the coloring $\mathcal{C}_i$ in the following way. 
For every $v_y \in V(G)$, \\
\begin{eqnarray*}
\mbox{If }v_y \in A^{ij} & \mbox{, then } & \\
& & f_{i,j}(v_y) = [y + n, y + 2n] \\
& \mbox{ else } & \\
& & f_{i,j}(v_y) = [g_{max}^{ij}(v_y), g_{max}^{ij}(v_y) + n] \mbox{, where } \\ 
& & g_{max}^{ij}(v_y) = \max(\{g~|~(v_y,v_g) \in E(G), \\
& & v_g \in A^{ij}\} \cup \{0\}).
\end{eqnarray*}  

Since the length of $f_{i,j}(v_y)$ is $n$, for every $v_y \in V(G)$, $I_{i,j}$ is a unit interval graph. 
It is easy to see that, $\forall v_x,v_y \in A^{ij}$, 
$2n \in f_{i,j}(v_x) \cap f_{i,j}(v_y)$ and therefore $A^{ij}$ forms a clique in 
$I_{i,j}$. Since $n \in f_{i,j}(v_x) \cap f_{i,j}(v_y)$, $\forall v_x,v_y \in B^{i,j}$, 
$B^{i,j}$ too forms a clique in $I_{i,j}$. For every $(v_x,v_y) \in E(G)$, with 
$v_x \in A^{ij}$ and $v_y \in B^{ij}$, we have 
$l(f_{i,j}(v_y)) = g_{max}^{ij}(v_y) \leq n \leq l(f_{i,j}(v_x)) = n+x \leq n + g_{max}^{ij}(v_y)$, where 
the last inequality is inferred from the fact that $(v_x,v_y) \in E(G)$ and $v_x \in 
A^{ij}$. But 
$n + g_{max}^{ij}(v_y) = r(f_{i,j}(v_y))$. Therefore, we get $l(f_{i,j}(v_y)) \leq l(f_{i,j}(v_x)) 
\leq r(f_{i,j}(v_y))$ and hence $(v_x,v_y) \in E(I_{i,j})$. Hence $I_{i,j}$ is a supergraph 
of $G$. 

Let $v_x<_{\mathcal{D}} v_y$ and $(v_x,v_y) \notin E(G)$. 
We now have to show that there exists some unit interval graph $I_{i,j}$ such 
that $(v_x,v_y) \notin E(I_{i,j})$. We know that, by assumption, there exists a 
coloring, say $\mathcal{C}_i$ (where $i \in [a]$), such that the strong 
support set $T_{xy}$ is 
favorably colored in $\mathcal{C}_i$. 
Let $\chi_j = \mathcal{C}_i(v_y)$. 
Let $g = g_{max}^{ij}(v_x)$. 
We claim that $g<y$. Assume, for contradiction, that $g>y$. 
Then $g \neq 0$ and $v_g \in A^{ij}$. Since $y>x$, we get 
$g>x$. Therefore, $v_g \in N_G^f(v_x)$ and $g>y$. This implies that 
$v_g \in T_{xy}$. Since $T_{xy}$ is favorably colored in 
$\mathcal{C}_i$, $\mathcal{C}_i(v_g) \neq \chi_j$. This contradicts 
the fact that $v_g \in A^{ij}$. Thus we prove the claim. 
Therefore, $r(f_{i,j}(v_x)) = n+g < n+ y = 
l(f_{i,j}(v_y))$ and hence $(v_x,v_y) \notin E(I_{i,j})$. We infer that  
$G = \bigcap_{i=1}^{a} \bigcap_{j=1}^{b} I_{i,j}$.
%
\begin{remark}
 \label{coloringAndCubiRem}
Note that $\forall v\in V(G), i \in [a], j \in [b]$, either $f_{i,j}(v) \cap [n,n] \neq \emptyset$ or $f_{i,j}(v) \cap [2n,2n] \neq \emptyset$ or both. 

\end{remark}

\hfill \qed
\end{proof}

\section{Cubicity and Degeneracy}
\label{CubandDegeneracy}

\subsection{An Upper Bound - Probabilistic Approach}
\begin{theorem}
\label{probabilisticTheorem}
For every $k$-degenerate graph $G$, $\cubi(G) \leq (k+2) \cdot \lceil 2e \log n \rceil$ 
\end{theorem}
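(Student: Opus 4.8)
The plan is to invoke Lemma~\ref{coloringAndCubiLemma} with $a = k+2$ colors and $b = \lceil 2e\log n\rceil$ colorings, and to produce the required family of colorings by the probabilistic method. The first step is to notice that degeneracy controls the size of every strong support set. Since $G$ is $k$-degenerate, $v_x$ has at most $k$ forward neighbors, so the weak support set $S_{xy}$ (the forward neighbors of $v_x$ lying beyond $v_y$, together with $v_y$) has at most $k+1$ elements; hence $T_{xy} = S_{xy}\cup\{v_x\}$ satisfies $|T_{xy}|\le k+2$, and in particular $|T_{xy}\setminus\{v_y\}|\le k+1$.

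Next I would generate each coloring $\mathcal{C}_i$ by assigning to every vertex, independently, a color chosen uniformly at random from the $a=k+2$ available colors. Fix a non-edge $(v_x,v_y)$ with $v_x<_{\mathcal{D}}v_y$. Conditioning on the color of $v_y$, the set $T_{xy}$ is favorably colored in $\mathcal{C}_i$ precisely when each of the at most $k+1$ remaining vertices of $T_{xy}$ avoids the color of $v_y$; by independence this event has probability at least $\left(1-\tfrac{1}{k+2}\right)^{k+1}$. Using the elementary inequality $\left(1-\tfrac1m\right)^{m-1}\ge e^{-1}$ with $m=k+2$ (which follows from $\ln(1+t)\le t$ applied to $t=\tfrac1{m-1}$), this probability is at least $1/e$. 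Therefore the probability that $T_{xy}$ fails to be favorably colored in a single coloring is at most $1-\tfrac1e\le e^{-1/e}$, the last step by $1-x\le e^{-x}$ with $x=1/e$.

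Because the $b$ colorings are drawn independently, the probability that $T_{xy}$ is favorably colored in none of $\mathcal{C}_1,\dots,\mathcal{C}_b$ is at most $e^{-b/e}$. A union bound over all $\overline{m}\le\binom{n}{2}<\tfrac{n^2}{2}$ non-edges bounds the probability that some strong support set is left unfavorably colored by every coloring by $\binom{n}{2}e^{-b/e}<\tfrac{n^2}{2}e^{-b/e}$. With $b=\lceil 2e\log n\rceil\ge 2e\log n$ we get $e^{-b/e}\le n^{-2}$, so this failure probability is below $\tfrac{n^2}{2}\cdot n^{-2}=\tfrac12<1$. Hence with positive probability there exists a family $\{\mathcal{C}_1,\dots,\mathcal{C}_b\}$ in which the strong support set of every non-edge is favorably colored; fixing such a family and applying Lemma~\ref{coloringAndCubiLemma} yields $\cubi(G)\le ab=(k+2)\lceil 2e\log n\rceil$.

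The only genuinely delicate point is the estimate $\left(1-\tfrac{1}{k+2}\right)^{k+1}\ge 1/e$, which pins down the constant; everything else is a routine first-moment and union-bound argument. I expect the crux to be combining this inequality with the bookkeeping $|T_{xy}\setminus\{v_y\}|\le k+1$ (so that the per-coloring success probability is uniformly at least $1/e$ over all non-edges), and the clean passage $1-\tfrac1e\le e^{-1/e}$, which is exactly what makes the constant $2e$ — rather than something larger — sufficient.
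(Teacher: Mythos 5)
Your proof is correct and follows essentially the same route as the paper's: generate $b=\lceil 2e\log n\rceil$ independent uniform colorings with $k+2$ colors, use $|T_{xy}|\le k+2$ to bound the per-coloring probability of favorable coloring below by $\left(\frac{k+1}{k+2}\right)^{k+1}\ge 1/e$, take a union bound over all non-edges, and invoke Lemma~\ref{coloringAndCubiLemma}. The only difference is cosmetic bookkeeping (you pass through $1-\tfrac1e\le e^{-1/e}$ and count $\binom{n}{2}$ non-edges, while the paper exponentiates $1-\left(\frac{k+1}{k+2}\right)^{k+1}$ directly and uses the cruder bound $n^2$), which changes nothing of substance.
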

\begin{proof}
Let $\chi = \{\chi_1, \ldots , \chi_{k+2}\}$ be a set of $k+2$ colors. Generate 
a random coloring $\mathcal{C}_1$ (need not be a proper coloring) of vertices of $G$ in the 
following way: For each vertex $v_x \in V(G)$, pick a color $\chi_j$, where $j \in [k+2]$, 
uniformly at random from $\chi$ and set $\mathcal{C}_1(v_x) = \chi_j$. In a 
similar way, independently generate random colorings $\mathcal{C}_2, 
\ldots , \mathcal{C}_b$, where $b= \lceil 2e \log n \rceil$. 

For every $(v_x,v_y) \notin E(G)$ and $v_x<_{\mathcal{D}} v_y$, since $G$ is $k$-degenerate 
we have $|T_{xy}| = t \leq k+2$. 
$Pr[T_{xy} \mbox{ is favorably colored in } \mathcal{C}_i ] 
 =  \frac{(k+2)(k+1)^{t-1}}{(k+2)^{t-1}} 
 =   \left(\frac{k+1}{k+2}\right)^{t-1}$ 
 $\geq  \left(\frac{k+1}{k+2}\right)^{k+1}.  
$
$\mbox{Therefore, }Pr[T_{xy} \mbox{ is not favorably colored in } 
\mathcal{C}_i ] \leq 1 - \left(\frac{k+1}{k+2}\right)^{k+1}$ 
$\leq e^{-\left(\frac{k+1}{k+2}\right)^{k+1}}.
$
Now taking $b=\lceil 2e \log n \rceil$, 
\begin{eqnarray*}
Pr [ \bigcup_{x,y: (v_x<_{\mathcal{D}} v_y), ((v_x,v_y) \notin E(G))} \bigcap_{i=1}^{b} (T_{xy} 
\mbox{ is not favorably colored in } \mathcal{C}_i)] \\ 
\leq n^2 e^{-b\left(\frac{k+1}{k+2}\right)^{k+1}} < 1. 
\end{eqnarray*}

Hence, 
$Pr[\mathcal{C}_1, \ldots, \mathcal{C}_b \mbox{ satisfy 
the condition of Lemma \ref{coloringAndCubiLemma}] } >0.$
Therefore, there exists a 
coloring $ \mathcal{C}_1, \ldots \mathcal{C}_b$, with $b= \lceil 2e \log n \rceil$,  
of $V(G)$ using colors from the set $\{\chi_1, 
\ldots , \chi_{k+2}\}$ such that the condition of Lemma \ref{coloringAndCubiLemma} is satisfied. 
Hence by Lemma \ref{coloringAndCubiLemma},  $\cubi(G) \leq  (k+2) \cdot \lceil 2e \log n \rceil$. 
\hfill \bbox
\end{proof}
\begin{corollary}
\label{ProbabilisticThmCor}
Let $G$ be a $k$-degenerate graph with $n$ vertices and $G'$ a graph constructed from $G$ with $V(G') = V(G) \cup V'$, $E(G') = E(G) \cup \{(u,v)~|~u \in V', v \in V(G') \}$, and $V(G) \cap V' = \emptyset$. Then, $\cubi(G') \leq (k+2)\cdot \lceil 2e \log n \rceil$.  
\end{corollary}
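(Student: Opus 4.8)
The plan is to reduce the claim about $G'$ to the main theorem (Theorem \ref{probabilisticTheorem}) applied to $G$, rather than reprove the probabilistic argument from scratch. The graph $G'$ is obtained from $G$ by adding a set $V'$ of new vertices, each of which is made universal (adjacent to every other vertex of $G'$, including all the other vertices of $V'$). So I would first extend the degeneracy order $\mathcal{D}$ of $G$ to an enumeration $\mathcal{D}'$ of $V(G')$ by placing \emph{all} the vertices of $V'$ before the vertices of $G$, in any order among themselves: that is, $\mathcal{D}' = (\text{vertices of } V', \text{ then } v_1, \ldots, v_n)$. With this ordering, each vertex $u \in V'$ has every later vertex as a forward neighbor, so its forward neighbors are the remaining $|V'|-1$ vertices of $V'$ plus all $n$ vertices of $G$; thus $V'$ vertices may have large forward degree. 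The crucial observation, however, is about non-edges.

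The key step is to analyze the strong support sets of non-edges in $G'$. Since every vertex of $V'$ is universal, there are no non-edges incident to any vertex of $V'$; every non-edge $(v_x, v_y) \notin E(G')$ has both endpoints in $V(G)$, and moreover $(v_x, v_y) \notin E(G)$ as well. For such a non-edge with $v_x <_{\mathcal{D}'} v_y$ (equivalently $v_x <_{\mathcal{D}} v_y$, since the relative order of $G$'s vertices is unchanged), the weak support set is $S_{xy} = \{ v_z \in N_{G'}^f(v_x) : v_y <_{\mathcal{D}'} v_z \} \cup \{v_y\}$. Because all of $V'$ precedes $v_y$ in $\mathcal{D}'$, no vertex of $V'$ lies after $v_y$, so the forward neighbors of $v_x$ that come after $v_y$ are exactly the forward neighbors of $v_x$ in $G$ that come after $v_y$. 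Hence $S_{xy}$, and therefore $T_{xy} = S_{xy} \cup \{v_x\}$, computed in $G'$ coincides exactly with the corresponding support set computed in $G$. In particular $|T_{xy}| \le k+2$, just as in the original $k$-degenerate graph $G$.

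Consequently I can run the identical probabilistic coloring argument of Theorem \ref{probabilisticTheorem}, but on $G'$ using a palette of $k+2$ colors and $b = \lceil 2e \log n \rceil$ colorings of $V(G')$. The bound $|T_{xy}| \le k+2$ still holds for every non-edge of $G'$, so each non-edge's strong support set fails to be favorably colored in all $b$ colorings with probability at most $e^{-b((k+1)/(k+2))^{k+1}}$; the union bound over the non-edges (of which there are fewer than $n^2$, since all lie within $V(G)$) gives a total failure probability strictly less than $1$. Thus a suitable family of colorings exists, and Lemma \ref{coloringAndCubiLemma}, applied to $G'$ with $a = k+2$ and $b = \lceil 2e \log n \rceil$, yields $\cubi(G') \le (k+2) \cdot \lceil 2e \log n \rceil$.

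The main subtlety to get right is the placement of $V'$ at the \emph{front} of the enumeration. If one instead appended the universal vertices at the end, they would appear as forward neighbors after the endpoints of many non-edges, inflating the support sets $T_{xy}$ well beyond $k+2$ and destroying the bound. Placing $V'$ first guarantees that universal vertices never contribute to any support set, so the effective degeneracy governing the support-set sizes remains $k$, and the count in the union bound stays at $n^2$ (the number of vertices $n$ of $G$, not $|V(G')|$) because all non-edges live inside $V(G)$. Verifying that the support sets in $G'$ equal those in $G$ is the one step that requires care; everything after that is a verbatim application of the preceding theorem and lemma.
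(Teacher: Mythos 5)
Your proof is correct, but it takes a genuinely different route from the paper. The paper does not re-run any coloring or probabilistic argument on $G'$: it takes the $(k+2)\cdot\lceil 2e\log n\rceil$ unit interval graphs $I_{i,j}$ already constructed for $G$ by Theorem \ref{probabilisticTheorem} via Lemma \ref{coloringAndCubiLemma}, keeps $f'_{i,j}(v)=f_{i,j}(v)$ for every $v\in V(G)$, and assigns each new vertex $v\in V'$ the interval $[n,2n]$ in every graph. Remark \ref{coloringAndCubiRem} --- every interval produced by the construction of Lemma \ref{coloringAndCubiLemma} contains the point $n$ or the point $2n$ --- then makes each vertex of $V'$ universal in each $I'_{i,j}$, so $G'=\bigcap_{i,j} I'_{i,j}$ follows at once. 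Your route instead prepends $V'$ to the enumeration, observes that every non-edge of $G'$ lies inside $V(G)$ and has the same strong support set as in $G$ (hence of size at most $k+2$), and replays the probabilistic argument and Lemma \ref{coloringAndCubiLemma} on $G'$; your emphasis on placing $V'$ \emph{first} rather than last, and on the union bound staying at $n^2$ rather than $|V(G')|^2$, is exactly the right care. The one point you should make explicit is that Lemma \ref{coloringAndCubiLemma}, as stated, presumes the enumeration is a degeneracy order of the graph it is applied to, and your $\mathcal{D}'$ is not one: $G'$ is not $k$-degenerate (its degeneracy is at least $|V'|-1$), and the vertices of $V'$ have forward degree up to $|V'|+n-1$ under $\mathcal{D}'$. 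The application is still legitimate because the lemma's proof never uses the forward-degree bound --- it uses only the enumeration and the favorable coloring of the support sets defined relative to it --- but that is a (mild) strengthening of the lemma's stated hypotheses and needs to be said. In exchange, your argument isolates a more general principle, namely that the cubicity bound follows from \emph{any} vertex ordering in which all strong support sets are small, whereas the paper's proof buys brevity by exploiting a geometric property of its particular interval representation.
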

\begin{proof}
From Theorem \ref{probabilisticTheorem}, we know that there exist $(k+2)\cdot \lceil 2e \log n \rceil$ unit interval graphs $I_{i,j}$, where $i \in [k+2]$, $j \in [\lceil 2e \log n \rceil]$, such that $G = \bigcap_i\bigcap_jI_{i,j}$. Let $f_{i,j}$ be the unit interval representation of each $I_{i,j}$ as per the construction in Lemma \ref{coloringAndCubiLemma}. We now construct $(k+2)\cdot \lceil 2e \log n \rceil$ unit interval graphs $I'_{i,j}$, where $i \in [k+2]$, $j \in [\lceil 2e \log n \rceil]$, such that $G' = \bigcap_i\bigcap_jI'_{i,j}$. Let $f'_{i,j}$ be a unit interval representation of $I'_{i,j}$. Then for each $v \in V(G')$, 
\begin{eqnarray*}
f'_{i,j}(v) & = & f_{i,j}(v) \mbox{, if }v \notin V' \\
f'_{i,j}(v) & = & [n,2n] \mbox{, if }v \in V' 
\end{eqnarray*}
From Remark \ref{coloringAndCubiRem} in Lemma \ref{coloringAndCubiLemma}, every $v \in V'$ is adjacent with every other vertex in each $I'_{i,j}$ since $f'_{i,j}(v) = [n,2n],~\forall v \in V'$. 
\hfill \bbox
\end{proof}
\subsubsection{Tightness of Theorem \ref{probabilisticTheorem}}
Recall that,  a realizer of a partially ordered set (poset) $\mathcal{P}$ is a set of linear extensions of $\mathcal{P}$, say $\mathcal{R}$, which satisfy the following condition: for any two distinct elements $x$ and $y$, $x\leq y$ in $\mathcal{P}$ if and only if $x \leq y$ in $L$, $\forall L \in \mathcal{R}$.  The poset dimension of $\mathcal{P}$, denoted by $dim(\mathcal{P})$, is the minimum integer $k$ such that there exists a realizer of $\mathcal{P}$ of cardinality $k$. Let $G_{\mathcal{P}}$ denote the underlying comparability graph of $\mathcal{P}$. Then by Theorem $1$ in \cite{DiptAdiga}, $\boxi(G_{\mathcal{P}}) \geq \frac{dim(\mathcal{P})}{2}$.  

Let $\mathbb{P}(n,p)$ be the probability space of height-2 posets with $n$ minimal elements forming set $A$ and $n$ maximal elements forming set $B$, where for any $a \in A$ and $b \in B$, $Pr[a<b] = p$. 
Erd\H{o}s, Kierstead, and Trotter in \cite{erdosKierstead} proved that 
when $p=\frac{1}{\log n}$, for almost all posets $\mathcal{P} \in \mathbb{P}(n,p)$, $\Delta(G_{\mathcal{P}}) < \frac{\delta_1 n}{\log n}$ and $dim(\mathcal{P}) > \delta_2 n$, where $\delta_1$ and $\delta_2$ are some positive constants. Then by Theorem $1$ in \cite{DiptAdiga}, $\cubi(G_{\mathcal{P}}) \geq \boxi(G_{\mathcal{P}}) \geq  \frac{dim(\mathcal{P})}{2} \geq \frac{\delta_2 n}{2}$. 

We know that $G_{\mathcal{P}}$ is $\Delta(G_{\mathcal{P}})$-degenerate. By Theorem \ref{probabilisticTheorem},  $\cubi(G_{\mathcal{P}}) \leq (\Delta(G_{\mathcal{P}}) + 2)\cdot \lceil 2e \log n \rceil  \leq (\frac{\delta_1 n}{\log n} + 2)\cdot \lceil 2e \log n \rceil \leq cn$, where $c$ is some constant. Hence the upper bound for cubicity given in Theorem \ref{probabilisticTheorem} is  tight.

\subsection{Deterministic Algorithm}
CONSTRUCT\_CUB\_REP($G$) is a deterministic algorithm which takes a simple, finite $k$-degenerate 
graph $G$ as input and outputs a cube representation in $8k\alpha$ dimensional 
space i.e., $8k\alpha$ unit interval graphs $I_{1,1}, \ldots , I_{1,8k}, 
\ldots , I_{\alpha, 1}, \ldots , I_{\alpha, 8k}$ such 
that $G = \bigcap_{i=1}^{\alpha} \bigcap_{j=1}^{8k} I_{i,j}$. In order 
to achieve this, CONSTRUCT\_CUB\_REP\\($G$) invokes the procedure CONSTRUCT\_COLORING i.e., Algorithm \ref{construct_coloring_abridged} 
(for a detailed version of this procedure , see Algorithm 4.4), 
$\alpha$ 
times and thereby generates $\alpha$ colorings 
$\mathcal{C}_1, \ldots , \mathcal{C}_{\alpha}$, where each coloring 
uses colors from the set $\{\chi_1, \ldots , \chi_{8k} \}$. 
Then from each coloring $\mathcal{C}_i$, it constructs $8k$ 
unit interval graphs $I_{i,1}, \ldots , I_{i,8k}$ using the construction 
described in Lemma \ref{coloringAndCubiLemma}, which is implemented in procedure 
CONSTRUCT\_UNIT\_INTERVAL\_GRAPHS.

\newcounter{line}
\newcommand{\MYSTATE}[1]{\STATE{\refstepcounter{line}\theline. #1}}

\begin{algorithm}[h]
\label{det_algo}
\begin{algorithmic}
\caption{CONSTRUCT\_CUB\_REP(G)}
\setcounter{line}{0}
\FOR{$y =n$ to $1$}
\MYSTATE{Initialize $BNN_1[v_y] \leftarrow \{v_x \in V(G)~|~v_x <_{\mathcal{D}} v_y, 
(v_x,v_y) \notin E(G)\}$.}
\MYSTATE{Initialize $FNN_1[v_y] \leftarrow \{v_z \in V(G)~|~v_y <_{\mathcal{D}} v_z, 
(v_y,v_z) \notin E(G)\}$.}
\ENDFOR
\MYSTATE{SET FLAG $\leftarrow$ TRUE.}
\MYSTATE{SET i $\leftarrow$ 0.}
\WHILE{FLAG = TRUE}
\MYSTATE{i++.}
\MYSTATE{$\mathcal{C}_i = $ CONSTRUCT\_COLORING($i$)}.
\FOR{$y=1$ to $n$}
\MYSTATE{SET $BNN_{i+1}[v_y] \leftarrow BNN_i[v_y] \setminus W(v_y,\mathcal{C}_i)$}
\MYSTATE{SET $FNN_{i+1}[v_y] \leftarrow FNN_i[v_y] \setminus Y(v_y,\mathcal{C}_i)$}
\ENDFOR
\MYSTATE{If $FNN_{i+1}[v_y] = \emptyset$, $\forall v_y \in V(G)$, then FLAG = FALSE.}
\ENDWHILE
\MYSTATE{SET $\alpha \leftarrow i$}
\MYSTATE{CONSTRUCT\_UNIT\_INTERVAL\_GRAPHS()}
\end{algorithmic}
\end{algorithm}

\begin{algorithm}[h]
\label{construct_coloring_abridged}
/*For a detailed version of this procedure, see Algorithm 4.4. \\
All data structures are assumed to be global. \\
\textbf{Notational Note:} \\
Let $\mathcal{C}_i^{v_z}$ denote the partial coloring at the stage 
when we have colored the vertices $v_n$ to $v_z$. 
Let $\mathcal{C}_i^{v_z = \chi_c}$ denote the partial coloring that 
results if we extend $\mathcal{C}_i^{v_{z+1}}$ by assigning color $\chi_c$ to 
$v_z$.*/
\begin{algorithmic}
\caption{CONSTRUCT\_COLORING($i$) /* abridged */}
\setcounter{line}{0}
\FOR{$y=n$ to $1$}
\FOR{each $\chi_c \in \{\chi_1, \ldots ,\chi_{8k}$ \}}
\MYSTATE{Compute $|X(v_y,\mathcal{C}_i^{v_y = \chi_c})|$, 
$|Y(v_y,\mathcal{C}_i^{v_y = \chi_c})|$, and $|Z(v_y,\mathcal{C}_i^{v_y = \chi_c})|$ 
as per equations (\ref{Xeqn}),(\ref{Yeqn}), and (\ref{Zeqn}) respectively.}
\IF{$|X(v_y,\mathcal{C}_i^{v_y = \chi_c})| \geq \frac{3}{4}|BNN_i[v_y]|$ and 
$|Y(v_y,\mathcal{C}_i^{v_y = \chi_c})| \geq \frac{3}{4}|Z(v_y,\mathcal{C}_i^{v_y = \chi_c})|$} 
\MYSTATE{SET $\mathcal{C}_i^{v_y} \leftarrow \mathcal{C}_i^{v_y= \chi_c}$ (i.e. 
SET $\mathcal{C}_i(v_y) \leftarrow \chi_c$).} 
\MYSTATE{SET $Y(v_y,\mathcal{C}_i^{v_y}) \leftarrow  Y(v_y,\mathcal{C}_i^{v_y= \chi_c})$}
\MYSTATE{BREAK.}
\ENDIF
\ENDFOR
\ENDFOR
\FOR{$y=1$ to $n$}
\MYSTATE{Compute $W(v_y,\mathcal{C}_i)$ as per equation (\ref{Weqn})}
\MYSTATE{SET $Y(v_y,\mathcal{C}_i) \leftarrow Y(v_y,\mathcal{C}_i^{v_1})$}
\ENDFOR
\MYSTATE{Return $\mathcal{C}_i$. }
\end{algorithmic}
\end{algorithm}

\begin{algorithm}
\label{construct_Unit_Interval_Graphs}
/*All data structures are assumed to be global. */ 
\begin{algorithmic}
\caption{CONSTRUCT\_UNIT\_INTERVAL\_GRAPHS()}
\setcounter{line}{0}
\MYSTATE{INITIALIZE $l(f_{i,j}(v_y)) \leftarrow 0, r(f_{i,j}(v_y)) \leftarrow n, 
\forall y \in [n], i \in \alpha, j \in [8k]$}
\FOR{ $i=1$ to  $\alpha$}
\FOR{ $y=n$ to  $1$}
\MYSTATE{SET $j \leftarrow c$, such that $\mathcal{C}_i(v_y) = \chi_c$}
\MYSTATE{SET $l(f_{i,j}(v_y)) \leftarrow y + n$}
\MYSTATE{SET $r(f_{i,j}(v_y)) \leftarrow y + 2n$}
\FOR{each $v \in N_G^b(v_y)$}
\IF{$(\mathcal{C}_i(v) \neq j) \cap (l(f_{i,j}(v)) = 0)$}
\MYSTATE{SET $l(f_{i,j}(v)) \leftarrow y$}
\MYSTATE{SET $r(f_{i,j}(v)) \leftarrow y + n$}
\ENDIF
\ENDFOR
\ENDFOR
\ENDFOR
\MYSTATE{Output $f_{i,j}(v_y), \forall y \in [n], i \in \alpha, j \in [8k]$}
\end{algorithmic}
\end{algorithm}

Note that in order for $G$ to be equal to $\bigcap_{i=1}^{\alpha} \bigcap_{j=1}^{8k} I_{i,j}$, 
Lemma \ref{coloringAndCubiLemma} requires that the colorings $\mathcal{C}_1, \ldots , 
\mathcal{C}_{\alpha}$ satisfy the following property: 
for every $(v_x,v_y) \notin E(G)$, where $v_x <_{\mathcal{D}} v_y$, there exists an $i \in [\alpha]$ 
such that the strong support set $T_{xy}$ of this non-edge is favorably colored in $\mathcal{C}_i$. 
The colorings $\mathcal{C}_1, \ldots ,\mathcal{C}_{\alpha}$ are generated one by one keeping 
this objective in mind. At the stage when we have just generated the $(i-1)$-th coloring 
$\mathcal{C}_{i-1}$, if a non-edge $(v_x,v_y)$ is such that its strong support set $T_{xy}$ 
is already favorably colored in some $\mathcal{C}_j$, where $j < i$, then we say that the 
non-edge $(v_x,v_y)$ is already DONE. Naturally at each stage we have to keep track of the 
non-edges that are not yet DONE. In order to do this, we introduce 
two data structures $BNN_i$ and $FNN_i$, for all $i \in [\alpha]$ \footnote{$BNN$ - 
Backward Non-Neighbor, $FNN$ - Forward Non-Neighbor}. For 
each $v_y \in V(G)$, 
\begin{eqnarray*}
BNN_i[v_y] & = & \{ v_x \in V(G)~|~ v_x \mbox{ is a backward non-neighbor of } 
v_y \mbox{, and } (v_x,v_y) \\
& &  \mbox{ is not yet DONE with respect to } \mathcal{C}_1, \ldots , \mathcal{C}_{i-1}.\} \\
FNN_i[v_y] & = & \{ v_z \in V(G)~|~ v_z \mbox{ is a forward non-neighbor of } 
v_y \mbox{, and }  (v_y,v_z) \\
& &  \mbox{ is not yet DONE with respect to } \mathcal{C}_1, \ldots , \mathcal{C}_{i-1}.\} \\
\end{eqnarray*}
It is easy to see that, $\bigcup_{v_y \in V(G)} BNN_i[v_y] = 
\bigcup_{v_y \in V(G)} FNN_i[v_y]$ and therefore, \\
$\left(\bigcup_{v_y \in V(G)}BNN_i[v_y] = \emptyset \right)$  $\iff$ 
$\left(\bigcup_{v_y \in V(G)} FNN_i[v_y] = \emptyset \right)$.
In Theorem \ref{AlgoTheorem}, we show that if we select $\alpha$ to be at least 
$(\lceil 2.42 \log n\rceil + 1)$, 
then $FNN_{\alpha+1}[v_y] = \emptyset$, $\forall v_y \in V(G)$. 
This clearly would mean that all non-edges are DONE with respect to 
$\mathcal{C}_1, \ldots , \mathcal{C}_{\alpha}$. In other words, the condition of Lemma 
\ref{coloringAndCubiLemma} 
will be satisfied for $\mathcal{C}_1, \ldots , \mathcal{C}_{\alpha}$. 

The only thing that remains to be discussed now is how our coloring strategy (i.e. the procedure 
CONSTRUCT\_COLORING) achieves the above objective, namely \\
$BNN_{\alpha+1}[v_y]= \emptyset$ and $FNN_{\alpha+1}[v_y] = \emptyset$, $\forall v_y \in V(G)$, if 
 $\alpha \geq (\lceil 2.42 \log n\rceil + 1)$. To start with $BNN_1[v_y]$ (respectively $FNN_1[v_y]$) contains 
all the backward (respectively forward) non-neighbors of $v_y$. The procedure 
CONSTRUCT\_COLORING\\($i$) generates the $i$-th coloring $\mathcal{C}_i$ as 
follows. It colors vertices in the reverse degeneracy order starting from vertex 
$v_n$. The partial coloring at the stage when we have colored the vertices 
$v_n$ to $v_z$ is denoted by $\mathcal{C}_i^{v_z}$. Note that $\mathcal{C}_i^{v_1} = \mathcal{C}_i$. 
Consider the stage at which the algorithm has already colored the vertices from $v_n$ up to $v_{y+1}$ 
and is about to color $v_y$. That is, we have the partial coloring $\mathcal{C}_i^{v_{y+1}}$ and 
are about to extend it to the partial coloring $\mathcal{C}_i^{v_y}$ by assigning 
one of the $8k$ possible colors to vertex $v_y$. 
Let $\mathcal{C}_i^{v_y = \chi_c}$ denote the partial coloring that results if we extend 
$\mathcal{C}_i^{v_{y+1}}$ by assigning color $\chi_c$ to $v_y$. 
The coloring $\mathcal{C}_i$ and the partial colorings $\mathcal{C}_i^{v_z}$, $\forall v_z \in V(G)$ 
and $\mathcal{C}_i^{v_z= \chi_c}$, $\forall v_z \in V(G), \chi_c \in \{\chi_1, \ldots , \chi_{8k}\}$, 
will be generically called \emph{the colorings associated with the $i$-th stage} (
i.e. the $i$-th invocation of CONSTRUCT\_COLORING).

With respect to colorings $\mathcal{C}_1, \ldots , \mathcal{C}_{i-1}$ and some coloring 
$\mathcal{C}_i'$ associated with the $i$-th stage, we define the following sets: 
\begin{eqnarray}
\label{Weqn}
W(v_w,\mathcal{C}_i') & = & \{v_x \in BNN_i[v_w]~|~\mbox{the strong support set } T_{xw} 
\mbox{ of non-edge } \\   
& & (v_x,v_w) \mbox{ is favorably colored in }  \mathcal{C}_i'\} \nonumber \\
\label{Xeqn}
X(v_w,\mathcal{C}_i') & = & \{v_x \in BNN_i[v_w]~|~\mbox{the weak support set } S_{xw} 
\mbox{ of non-edge } \\ 
& & (v_x,v_w) \mbox{ is favorably colored in }  \mathcal{C}_i'\} \nonumber \\
\label{Yeqn}
Y(v_w,\mathcal{C}_i') & = & \{v_z \in FNN_i[v_w]~|~\mbox{the strong support set } T_{wz} 
\mbox{ of non-edge } \\ 
& & (v_w,v_z) \mbox{ is favorably colored in }  \mathcal{C}_i'\} \nonumber \\
\label{Zeqn}
Z(v_w,\mathcal{C}_i') & = & \{v_z \in FNN_i[v_w]~|~\mbox{the weak support set } S_{wz} 
\mbox{ of non-edge } \\ 
& & (v_w,v_z) \mbox{ is favorably colored in }  \mathcal{C}_i'\}  \nonumber
\end{eqnarray}

Naturally, we want to give a color $\chi_c$ to $v_y$ such that 
a large number of (not yet DONE) non-edges incident on $v_y$ get DONE. 
With respect to the colorings $\mathcal{C}_1, \ldots , \mathcal{C}_{i-1}$ 
and the partial coloring $\mathcal{C}_i^{v_y = \chi_c}$, we define the 
status of a non-edge incident on $v_y$ as follows: A non-edge 
$(v_y,v_z) \in FNN_i[v_y]$ is DONE\footnote{Recall that we had defined earlier 
that a non-edge $(v_x,v_y)$ is DONE with respect to a list of colorings 
$\mathcal{C}_1, \ldots , \mathcal{C}_{i-1}$ if $T_{xy}$ was favorably 
colored in some $C_j$, where $j<i$. Here we extend this notion, 
by allowing the partial coloring $\mathcal{C}_i^{v_y = \chi_c}$ also 
in the list.} 
if $T_{yz}$ is favorably colored in 
$\mathcal{C}_i^{v_y = \chi_c}$ and is NOT-DONE if 
$T_{yz}$ is not favorably colored in $\mathcal{C}_i^{v_y = \chi_c}$. 
A non-edge 
$(v_x,v_y) \in BNN_i[v_y]$ is HOPELESS\footnote{A HOPELESS non-edge 
$(v_x,v_y)$ will not be DONE with respect to $\mathcal{C}_1, \ldots , 
\mathcal{C}_{i}$ if we set $\mathcal{C}_i(v_y) = \chi_c$, irrespective of the 
color given to $v_{y-1}, \ldots , v_1$.} if $S_{xy}$ (which happens to be a proper 
subset of $T_{xy}$) is not favorably colored in $\mathcal{C}_i^{v_y = \chi_c}$ 
and is HOPEFUL if $S_{xy}$ is favorably colored in $\mathcal{C}_i^{v_y = \chi_c}$. 
So when we decide a color for $v_y$, our intention is to make a large fraction of 
the HOPEFUL non-edges of $FNN_i[v_y]$ (i.e. the set $Z(v_y, \mathcal{C}_i^{v_y = \chi_c})$), 
DONE and to make a large fraction of 
$BNN_i[v_y]$, HOPEFUL. More formally, 
we want the algorithm to assign a color $\chi_c$ to $v_y$ such that the following 
two conditions are satisfied.\\
(i) $|X(v_y,\mathcal{C}_i^{v_y = \chi_c})| \geq \frac{3}{4}|BNN_i[v_y]|$, and \\
(ii)$|Y(v_y,\mathcal{C}_i^{v_y = \chi_c})| \geq \frac{3}{4}|Z(v_y,\mathcal{C}_i^{v_y = \chi_c})|$. \\
The obvious question then is whether such a color $\chi_c$ 
always exists, for each $v_y \in V(G)$. Lemma \ref{ratiolemma} 
answers this question in the affirmative. It follows that, 
the number of non-edges that are not yet DONE with respect to 
colorings $\mathcal{C}_1, \ldots \mathcal{C}_{i}$ is at most a constant 
fraction of the number of non-edges that were not DONE with respect to 
colorings $\mathcal{C}_1, \ldots \mathcal{C}_{i-1}$. This is 
formally proved in Lemma \ref{mbarlemma}. That 
$BNN_{\alpha+1}[v_y] = \emptyset$ and $FNN_{\alpha+1}[v_y] = \emptyset$, 
$\forall v_y \in V(G)$, is a consequence of this and is formally proved 
in Theorem \ref{AlgoTheorem}.

\begin{lemma}
\label{ratiolemma}
For every $ i \in [\alpha], v_y \in V(G)$, 
(i) $|X(v_y,\mathcal{C}_i)| \geq \frac{3}{4}|BNN_i[v_y]|$, and \\
(ii)$|Y(v_y,\mathcal{C}_i)| \geq \frac{3}{4}|Z(v_y,\mathcal{C}_i)|$.
\end{lemma}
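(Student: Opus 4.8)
The plan is to fix a stage $i$ and a vertex $v_y$, and to show that at the moment CONSTRUCT\_COLORING$(i)$ is about to color $v_y$ there is always at least one color $\chi_c$ for which both conditions (i) and (ii) hold for the partial coloring $\mathcal{C}_i^{v_y = \chi_c}$; since the procedure scans all $8k$ colors and commits to the first such color, this establishes that the committed coloring meets both inequalities. The first thing I would record is an invariance: because vertices are colored in reverse degeneracy order, every vertex of $S_{xy}\setminus\{v_y\}$ (a forward neighbor of $v_x$ lying after $v_y$) and every vertex of $T_{yz}\setminus\{v_z\}$ (a forward neighbor of $v_y$ lying after $v_z$, together with $v_y$ itself) already carries its final color once $v_y$ is colored. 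Hence whether $S_{xy}$ or $T_{yz}$ is favorably colored is decided at this stage and is unaffected by the colors later given to $v_{y-1},\ldots,v_1$; consequently $X(v_y,\mathcal{C}_i) = X(v_y,\mathcal{C}_i^{v_y})$ and $Y(v_y,\mathcal{C}_i) = Y(v_y,\mathcal{C}_i^{v_y})$, so proving the two inequalities for the partial coloring at commit time proves them for the final $\mathcal{C}_i$. I would also note that $Z(v_y,\mathcal{C}_i^{v_y = \chi_c})$ does not depend on $\chi_c$ at all, since favorable coloring of $S_{yz}$ involves only vertices lying after $v_y$.

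For condition (i) I would use a double-counting argument over the $8k$ colors. Writing $b = |BNN_i[v_y]|$, let $a_c$ be the number of $v_x\in BNN_i[v_y]$ whose weak support set $S_{xy}$ fails to be favorably colored when $v_y$ receives $\chi_c$; a given $v_x$ contributes to $a_c$ only for the (at most $k$) colors appearing on $\{v_z\in N_G^f(v_x)\mid v_y <_{\mathcal{D}} v_z\}$, because $G$ is $k$-degenerate. Hence $\sum_c a_c \le kb$, so strictly fewer than $4k$ colors can have $a_c > b/4$; every other color satisfies $|X(v_y,\mathcal{C}_i^{v_y=\chi_c})| = b - a_c \ge \tfrac34 b$, i.e.\ condition (i).

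For condition (ii) I would exploit that passing from $S_{yz}$ to $T_{yz}$ only adds the requirement $\mathcal{C}(v_z)\neq \mathcal{C}(v_y)=\chi_c$, so $Y(v_y,\mathcal{C}_i^{v_y=\chi_c})$ is exactly the set of $v_z\in Z(v_y,\mathcal{C}_i^{v_y=\chi_c})$ with $\mathcal{C}(v_z)\neq\chi_c$. The elements of $Z$ removed by the choice $\chi_c$ are precisely those colored $\chi_c$, and since each such $v_z$ has a single color, summing over the $8k$ colors gives $|Z|$; thus strictly fewer than $4$ colors can remove more than $\tfrac14|Z|$ elements, and every other color satisfies condition (ii). (The cases $b=0$ or $|Z|=0$ are trivial.)

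Combining the two counts by a union bound, the number of colors violating (i) or (ii) is strictly less than $4k+4$, which is at most $8k$ whenever $k\ge 1$; so among the $8k$ available colors at least one satisfies both conditions simultaneously, and CONSTRUCT\_COLORING will find it. The main obstacle is precisely this simultaneity: each condition is easy to meet in isolation, but I must guarantee a single color that works for both, and it is exactly the inequality $4k + 4 \le 8k$ that explains why the palette size $8k$ (rather than something smaller) is the right choice. A secondary point requiring care is the invariance observation above, which is what lets the per-stage guarantee provided by the algorithm translate into the statement about the final coloring $\mathcal{C}_i$ quantified in the lemma.
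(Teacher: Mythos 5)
Your proof is correct, and it reaches the same reduction as the paper (fix the stage $i$, note that $X$ and $Y$ are already determined by the partial coloring $\mathcal{C}_i^{v_y}$ since weak/strong support sets of the relevant non-edges contain only $v_y$ and vertices after it, and then exhibit one color among the $8k$ satisfying both tests so that the BREAK is always executed), but the final counting step is genuinely different. The paper works with the product set $A = BNN_i[v_y] \times Z(v_y,\mathcal{C}_i)$: for each pair $\langle v_x,v_z\rangle$ at least $7k-1$ of the $8k$ colors are good (at most $k$ colors blocked by forward neighbors of $v_x$, one more by $\mathcal{C}_i(v_z)$), so by pigeonhole some color $\chi_c$ has $|X|\cdot|Y| = |S^c| \geq \frac{7k-1}{8k}|A| \geq \frac{3}{4}|BNN_i[v_y]|\cdot|Z(v_y,\mathcal{C}_i)|$, from which both inequalities are extracted using $|X|\leq|BNN_i[v_y]|$ and $|Y|\leq|Z(v_y,\mathcal{C}_i)|$. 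You instead bound the \emph{bad} colors for each condition separately (fewer than $4k$ for condition (i) since $\sum_c a_c \leq k\,|BNN_i[v_y]|$, fewer than $4$ for condition (ii) since each $v_z \in Z$ is killed by exactly one color) and take a union bound, $4k+2 < 8k$. Both arguments rest on the same two blocking facts, but yours is more elementary and transparent about why the palette size $8k$ suffices, and it handles the degenerate cases more cleanly: when $Z(v_y,\mathcal{C}_i) = \emptyset$ but $BNN_i[v_y] \neq \emptyset$ the paper's product set $A$ is empty and its pigeonhole literally yields nothing about $|X|$, whereas your per-condition count still delivers condition (i) with no special treatment. What the paper's version buys in exchange is a single unified pigeonhole yielding the stronger multiplicative inequality $|X|\cdot|Y| \geq \frac{3}{4}|BNN_i[v_y]|\cdot|Z(v_y,\mathcal{C}_i)|$ in one shot.
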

\begin{proof}
The statement of the lemma is obvious if the BREAK statement in 
Step 4 of CONSTRUCT\_COLORING($i$) (abridged version) is executed, 
for every $ i \in [\alpha]$ and $ v_y \in V(G)$. In order to 
prove that the BREAK statement will be executed, it is sufficient to show that 
there exists a color $\chi_c \in \{\chi_1, \ldots , \chi_{8k}\}$ such that 
$|X(v_y,\mathcal{C}_i^{v_y = \chi_c})| \geq \frac{3}{4}|BNN_i[v_y]|$ and 
$|Y(v_y,\mathcal{C}_i^{v_y = \chi_c})| \geq \frac{3}{4}|Z(v_y,\mathcal{C}_i^{v_y = \chi_c})|$.  
Since the vertices in $Z(v_y,\mathcal{C}_i^{v_y = \chi_c})$ or 
$Z(v_y,\mathcal{C}_i)$ do not depend on the colors given to 
$v_1, \ldots v_y$, we have $Z(v_y,\mathcal{C}_i^{v_y = \chi_c}) = 
Z(v_y,\mathcal{C}_i)$ . Hence, 
$Z(v_y,\mathcal{C}_i^{v_y = \chi_c})$ and $Z(v_y,\mathcal{C}_i)$ 
can be used interchangeably. 
%

Let $A = BNN_i[v_y] \times Z(v_y,\mathcal{C}_i)$. Let 
$<v_x,v_z>$ be an element of $A$. We say a color $\chi_c$ is 
\emph{good} for $<v_x,v_z>$, if $v_x \in X(v_y,\mathcal{C}_i^{v_y = \chi_c})$ and 
$v_z \in Y(v_y,\mathcal{C}_i^{v_y = \chi_c})$. In other words,  
$\chi_c$ is \emph{good} for $<v_x,v_z>$, if both $S_{xy}$ and $T_{yz}$ are 
favorably colored in $\mathcal{C}_i^{v_y = \chi_c}$. 
$S_{xy}$ is favorably colored in $\mathcal{C}_i^{v_y = \chi_c}$, 
if $\chi_c \notin P$, where $P = \{\mathcal{C}_i^{v_y = \chi_c}(v_w)~|~v_w \in N_G^f(v_x), v_y <_{\mathcal{D}} v_w\}$. 
Since $|N_G^f(v_x)| \leq k$, $|P| \leq k$. 
Therefore, there are at least $8k-k =7k$ possible values that $\chi_c$ can take  
such that $S_{xy}$ is favorably colored in $\mathcal{C}_i^{v_y = \chi_c}$. 
For $T_{yz}$ also to be 
favorably colored in $\mathcal{C}_i^{v_y = \chi_c}$, the only thing required is 
that $\chi_c \neq 
\mathcal{C}_i^{v_y = \chi_c}(v_z)$, since $v_z \in Z(v_y,\mathcal{C}_i)$ and therefore 
$S_{yz}$ is already favorably colored. 
This implies that there are at least $7k-1$ possible 
values that $\chi_c$ can take such that both $S_{xy}$ and 
$T_{yz}$ are favorably colored in $\mathcal{C}_i^{v_y = \chi_c}$. In other words, there 
are at least $7k-1$ \emph{good} colors  for $<v_x,v_z>$. Thus for each element  
in $A$, there are at least $7k-1$ colors \emph{good} for it. For each color 
$\chi_j \in \{\chi_1, \ldots , \chi_{8k}\}$, 
let $S^j = \{<v_x,v_z> \in A~|~\chi_j\mbox{ is \emph{good} for 
} <v_x,v_z>\} = X(v_y,\mathcal{C}_i^{v_y = \chi_j}) 
\times Y(v_y,\mathcal{C}_i^{v_y = \chi_j})$. 
Since there are at least $(7k-1)$ colors \emph{good} for each element 
in $A$, $\Sigma_{j \in [8k]}|S^j| \geq (7k-1)|A|$. Then by pigeonhole principle, 
there exists a $c \in [8k]$ such that $|S^c| = 
 |X(v_y,\mathcal{C}_i^{v_y = \chi_c})| 
\cdot |Y(v_y,\mathcal{C}_i^{v_y = \chi_c})| \geq \frac{(7k-1)}{8k}|A| 
= \frac{7k-1}{8k}|BNN_i[v_y]|\cdot |Z(v_y,\mathcal{C}_i)| \geq 
\frac{3}{4}|BNN_i[v_y]|\cdot |Z(v_y,\mathcal{C}_i)|$ elements of $A$. In other words, 
$|X(v_y,\mathcal{C}_i^{v_y = \chi_c})| \geq \frac{3}{4}|BNN_i[v_y]|$ and 
$|Y(v_y,\mathcal{C}_i^{v_y = \chi_c})| \geq \frac{3}{4}|Z(v_y,\mathcal{C}_i^{v_y = \chi_c})|$. 
\hfill \qed
\end{proof}

\begin{lemma}
\label{mbarlemma}
Let $\overline{m}_i = \Sigma_{y \in [n]}|FNN_i[v_y]|$. Then $\overline{m}_{i+1} \leq \frac{7}{16}\overline{m}_i$.
\end{lemma}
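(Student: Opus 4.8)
The plan is to express $\overline{m}_{i+1}$ directly in terms of the sets removed during the $i$-th stage, and then bound the total size of those removed sets from below using Lemma~\ref{ratiolemma}. From the update rule $FNN_{i+1}[v_y] = FNN_i[v_y]\setminus Y(v_y,\mathcal{C}_i)$, together with the fact that $Y(v_y,\mathcal{C}_i)\subseteq FNN_i[v_y]$ by definition, summing over all $y\in[n]$ immediately gives $\overline{m}_{i+1} = \overline{m}_i - \sum_{y\in[n]}|Y(v_y,\mathcal{C}_i)|$. Thus the whole lemma reduces to the single inequality $\sum_{y\in[n]}|Y(v_y,\mathcal{C}_i)| \geq \frac{9}{16}\overline{m}_i$, after which $\overline{m}_{i+1}\leq \left(1-\frac{9}{16}\right)\overline{m}_i = \frac{7}{16}\overline{m}_i$ follows at once.

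The key observation, and the step I expect to be the main (though not really difficult) obstacle, is a double-counting identity relating the two ``weak support'' tallies taken from opposite endpoints of a non-edge. Fix a not-yet-DONE non-edge $(v_x,v_y)$ with $v_x<_{\mathcal{D}} v_y$; then $v_x\in BNN_i[v_y]$ and $v_y\in FNN_i[v_x]$, and the condition defining membership $v_x\in X(v_y,\mathcal{C}_i)$ (namely, that $S_{xy}$ is favorably colored in $\mathcal{C}_i$) is literally the same condition defining $v_y\in Z(v_x,\mathcal{C}_i)$. Hence each such non-edge whose weak support set is favorably colored contributes exactly once to $\sum_y|X(v_y,\mathcal{C}_i)|$ (indexed by its later endpoint) and exactly once to $\sum_y|Z(v_y,\mathcal{C}_i)|$ (indexed by its earlier endpoint), so that $\sum_{y\in[n]}|X(v_y,\mathcal{C}_i)| = \sum_{y\in[n]}|Z(v_y,\mathcal{C}_i)|$.

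With this identity in hand the rest is a short two-step chain. Summing Lemma~\ref{ratiolemma}(i) over all $y$ and using $\sum_y|BNN_i[v_y]|=\overline{m}_i$ gives $\sum_y|X(v_y,\mathcal{C}_i)|\geq \frac{3}{4}\overline{m}_i$. Summing Lemma~\ref{ratiolemma}(ii) over all $y$ gives $\sum_y|Y(v_y,\mathcal{C}_i)|\geq \frac{3}{4}\sum_y|Z(v_y,\mathcal{C}_i)|$. Substituting the double-counting identity and then the first bound yields $\sum_y|Y(v_y,\mathcal{C}_i)|\geq \frac{3}{4}\sum_y|X(v_y,\mathcal{C}_i)|\geq \frac{3}{4}\cdot\frac{3}{4}\overline{m}_i=\frac{9}{16}\overline{m}_i$, which is exactly what is needed. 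The only points requiring care are verifying that $\sum_y|BNN_i[v_y]|=\sum_y|FNN_i[v_y]|=\overline{m}_i$, since each not-yet-DONE non-edge is counted once from each endpoint, and checking that the pointwise inequalities of Lemma~\ref{ratiolemma} survive summation; both are routine.
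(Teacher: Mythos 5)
Your proposal is correct and follows essentially the same route as the paper: both reduce the claim to the chain $\sum_y|Y| \geq \tfrac{3}{4}\sum_y|Z| = \tfrac{3}{4}\sum_y|X| \geq \tfrac{9}{16}\overline{m}_i$ via Lemma~\ref{ratiolemma}, where your double-counting identity $\sum_y|X(v_y,\mathcal{C}_i)| = \sum_y|Z(v_y,\mathcal{C}_i)|$ is exactly the paper's observation that both sums count the HOPEFUL non-edges, and $\sum_y|BNN_i[v_y]| = \sum_y|FNN_i[v_y]| = \overline{m}_i$ is used identically. Your write-up is in fact slightly more explicit than the paper's in justifying that identity endpoint by endpoint.
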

\begin{proof}
From Step $8$ of CONSTRUCT\_CUB\_REP($G$), we have 
$|FNN_{i+1}[v_y]| = \\ |FNN_i[v_y]| - |Y(v_y,\mathcal{C}_i)| \leq 
|FNN_i[v_y]| - \frac{3}{4}|Z(v_y,\mathcal{C}_i)|$ (using Lemma \ref{ratiolemma}). 
Taking summation over all $y \in [n]$, we get 
$\overline{m}_{i+1} \leq \overline{m}_{i} -
\frac{3}{4}\Sigma_{y \in [n]}|Z(v_y,\mathcal{C}_i)| = 
\overline{m}_{i} - 
\frac{3}{4}\Sigma_{y \in [n]}|X(v_y,\mathcal{C}_i)|$.  
The last equality comes from the fact that both 
$\Sigma_{y \in [n]}$ $|X(v_y,\mathcal{C}_i)|$ and 
$\Sigma_{y \in [n]}|Z(v_y,\mathcal{C}_i)|$ 
represent the number of HOPEFUL non-edges in $G$ with respect to 
colorings $\mathcal{C}_1, \ldots , \mathcal{C}_i$. 
From Lemma \ref{ratiolemma}, we have $|X(v_y,\mathcal{C}_i)| 
\geq \frac{3}{4}|BNN_i[v_y]|$. Therefore, 
$\overline{m}_{i+1} \leq 
\overline{m}_{i} - 
(\frac{3}{4})^2\Sigma_{y \in [n]}$ $|BNN_i[v_y]|$.  
Since $\Sigma_{y \in [n]}$ $|BNN_i[v_y]| = \Sigma_{y \in [n]}|FNN_i[v_y]|$, we get 
$\overline{m}_{i+1} \leq \overline{m}_{i} - (\frac{3}{4})^2\Sigma_{y \in [n]}$ $|FNN_i[v_y]|$  
$=  \overline{m}_{i} - \frac{9}{16}\overline{m}_{i} 
= \frac{7}{16}\overline{m}_i$.
\hfill \qed
\end{proof}

\begin{theorem}
\label{AlgoTheorem}
Let $G$ be a $k$-degenerate graph. Algorithm CONSTRUCT\_CUB\_\\REP($G$) constructs a 
valid $8k(\lceil 2.42 \log n\rceil + 1)$ dimensional cube representation for $G$. 
\end{theorem}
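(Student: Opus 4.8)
The plan is to split the statement into two independent claims and handle them separately: first, that the family of unit interval graphs returned is a \emph{valid} representation, i.e.\ $G=\bigcap_{i=1}^{\alpha}\bigcap_{j=1}^{8k} I_{i,j}$; and second, that the number of dimensions $8k\alpha$ does not exceed $8k(\lceil 2.42\log n\rceil+1)$. Since the output always lives in exactly $8k\alpha$ dimensions, the dimension bound reduces entirely to showing that the \textbf{while}-loop of CONSTRUCT\_CUB\_REP halts with $\alpha\le \lceil 2.42\log n\rceil+1$.

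For validity I would first note that the coloring subroutine never fails: by Lemma \ref{ratiolemma} a color $\chi_c$ meeting conditions (i) and (ii) always exists, so the \textbf{break} in CONSTRUCT\_COLORING fires and every vertex is colored. Next I would verify the invariant maintained by Steps 10--11, namely that $BNN_i[v_y]$ and $FNN_i[v_y]$ contain precisely the backward, respectively forward, non-neighbors of $v_y$ whose non-edges are not yet DONE with respect to $\mathcal{C}_1,\ldots,\mathcal{C}_{i-1}$. The loop exits only when $FNN_{\alpha+1}[v_y]=\emptyset$ for all $v_y$; using the identity $\bigcup_y BNN_i[v_y]=\bigcup_y FNN_i[v_y]$, this means every non-edge $(v_x,v_y)$ is DONE, i.e.\ its strong support set $T_{xy}$ is favorably colored in some $\mathcal{C}_i$. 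That is exactly the hypothesis of Lemma \ref{coloringAndCubiLemma} with $a=8k$ and $b=\alpha$, and one then checks that CONSTRUCT\_UNIT\_INTERVAL\_GRAPHS reproduces the interval assignment $f_{i,j}$ of that lemma, yielding $G=\bigcap_{i,j}I_{i,j}$.

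For the dimension bound I would track $\overline{m}_i=\sum_{y\in[n]}|FNN_i[v_y]|$. Initially $\overline{m}_1$ equals the number $\overline{m}$ of non-edges, so $\overline{m}_1\le\binom{n}{2}<\frac{n^2}{2}$. Lemma \ref{mbarlemma} supplies the geometric decay $\overline{m}_{i+1}\le\frac{7}{16}\overline{m}_i$, hence $\overline{m}_{\alpha+1}\le\left(\frac{7}{16}\right)^{\alpha}\overline{m}_1<\frac12\left(\frac{7}{16}\right)^{\alpha}n^2$. Substituting $\alpha=\lceil 2.42\log n\rceil+1$ and using $2.42>2/\log(16/7)\approx 2.419$, a short calculation gives $2.42\log(7/16)<-2$, so $\frac12\left(\frac{7}{16}\right)^{\alpha}n^2<\frac{7}{32}<1$. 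As $\overline{m}_{\alpha+1}$ is a nonnegative integer it must equal $0$, so $FNN_{\alpha+1}[v_y]=\emptyset$ for all $v_y$ by this iteration and the loop halts with $\alpha\le\lceil 2.42\log n\rceil+1$, giving dimension $8k\alpha\le 8k(\lceil 2.42\log n\rceil+1)$.

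I expect the main obstacle to lie at the interface between the two halves rather than in either computation: making airtight that the purely data-structural halting test $FNN_{\alpha+1}=\emptyset$ is logically identical to the ``every $T_{xy}$ favorably colored'' hypothesis that Lemma \ref{coloringAndCubiLemma} requires, which rests on the progress guarantee of Lemma \ref{ratiolemma} together with the $BNN/FNN$ update invariant. The one place I would be especially careful is confirming that CONSTRUCT\_UNIT\_INTERVAL\_GRAPHS, which scans only $N_G^b(v_y)$, actually reconstructs the quantities $g_{max}^{ij}(\cdot)$ of Lemma \ref{coloringAndCubiLemma} (whose maxima range over neighbors on \emph{both} sides of the degeneracy order); everything else, including pinning the constant as $2.42\ge 2/\log(16/7)$ so that $\frac12(7/16)^{\alpha}n^2$ falls below $1$, is routine.
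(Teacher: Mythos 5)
Your proposal follows essentially the same route as the paper's proof: Lemma \ref{mbarlemma} supplies the geometric decay of $\overline{m}_i$, the arithmetic with the constant $2.42$ forces $\overline{m}_{\alpha+1}=0$ and hence termination within $\lceil 2.42\log n\rceil+1$ iterations, and emptiness of the sets $FNN_{\alpha+1}[\cdot]$ is precisely the hypothesis of Lemma \ref{coloringAndCubiLemma} with $a=8k$ and $b=\alpha$. On that half you are, if anything, more careful than the paper, which simply writes $\overline{m}_1\le n^2$ and $\overline{m}_{\alpha}\le 1$.

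The worry you postponed to the end, however, is not a routine verification: the check fails, and you have located a genuine defect that the paper's proof skips over with the single assertion that the procedure constructs the graphs ``as described in Lemma \ref{coloringAndCubiLemma}.'' Because CONSTRUCT\_UNIT\_INTERVAL\_GRAPHS writes intervals only while scanning $N_G^b(v_y)$, a vertex $v\in B^{ij}$ whose neighbors in $A^{ij}$ are all \emph{backward} neighbors of $v$ keeps its initialization interval $[0,n]$ instead of $[g_{max}^{ij}(v),\,g_{max}^{ij}(v)+n]$, and every edge joining $v$ to $A^{ij}$ then disappears from $I_{i,j}$. This is forced to happen, for instance, on the path with $E(G)=\{(v_1,v_3),(v_2,v_3)\}$ and degeneracy order $v_1,v_2,v_3$: for the non-edge $(v_1,v_2)$ to become DONE, some $\mathcal{C}_i$ must color $v_2$ and $v_3$ differently (favorable coloring of $T_{12}=\{v_1,v_2,v_3\}$ demands it), and in the dimension $j$ with $\chi_j=\mathcal{C}_i(v_2)$ the procedure outputs $f_{i,j}(v_3)=[0,3]$ and $f_{i,j}(v_2)=[5,8]$, so the edge $(v_2,v_3)$ is absent from $I_{i,j}$ and the output is not a cube representation of $G$. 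So your plan, carried out honestly, is no less complete than the published argument, and the obstacle you flagged is a real bug in the pseudocode rather than a gap in your reasoning. The theorem (and your proof) is saved by a one-line repair: let the inner loop range over all of $N_G(v_y)$ rather than $N_G^b(v_y)$. Since the outer loop runs in decreasing index order and writes only unset intervals, the first write to $v$ in dimension $(i,j)$ then uses the largest-indexed neighbor of $v$ in $A^{ij}$, which is exactly $g_{max}^{ij}(v)$ of Lemma \ref{coloringAndCubiLemma}, and the running-time bound is unaffected because $\sum_{y}|N_G(v_y)|=2m\le 2kn$.
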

\begin{proof}
 The algorithm constructs $\alpha$ colorings $\mathcal{C}_1, 
\mathcal{C}_2, \ldots , \mathcal{C}_{\alpha}$ of $V(G)$, where each 
coloring uses colors from the set $\{\chi_1, \ldots ,\chi_{8k}\}$. 
From Lemma \ref{mbarlemma}, we have $\overline{m}_{i+1} \leq \frac{7}{16}\overline{m}_i$. 
Also, $\overline{m}_1 = |\Sigma_{y \in [n]} FNN_1[v_y]| \leq n^2$. 
Putting $\alpha = (\lceil 2.42 \log n\rceil + 1) $, we get $\overline{m}_{\alpha} \leq 1$. 
That is, for every $y \in [n]$, $FNN_{\alpha + 1}[v_y] = EMPTY$. This means that, 
for every $(v_x,v_y) \notin E(G)$, where $v_x <_{\mathcal{D}} v_y$, there exists an $i \in [\alpha]$ 
such that $T_{xy}$ is favorably colored in $\mathcal{C}_i$. 
Then by Lemma \ref{coloringAndCubiLemma} , $\cubi(G) \leq 8k(\lceil 2.42 \log n\rceil + 1)$. 
The procedure CONSTRUCT\_UNIT\_INTERVAL\_GRAPHS  
constructs $8k(\lceil 2.42 \log n\rceil + 1)$ unit interval graphs whose 
intersection gives $G$, as described in Lemma \ref{coloringAndCubiLemma}. 
Thus we prove the theorem. 
\hfill \bbox
\end{proof}

%

\begin{algorithm}
\label{construct_coloring_detailed}
/*All data structures are assumed to be global. \\
\textbf{Notational Note:} \\
Let $\mathcal{C}_i^{v_z}$ denote the partial coloring at the stage 
when we have colored the vertices $v_n$ to $v_z$. 
Let $\mathcal{C}_i^{v_z = \chi_c}$ denote the partial coloring that 
results if we extend $\mathcal{C}_i^{v_{z+1}}$ by assigning color $\chi_c$ to 
$v_z$. */
\begin{algorithmic}
\caption{CONSTRUCT\_COLORING($i$) /* detailed */}
\setcounter{line}{0}
\MYSTATE{Initialize $FNC[w][j] \leftarrow 0, \forall w \in [n], j \in [8k]$}
\MYSTATE{Initialize $HOPE\_MATRIX[w][z] \leftarrow 0, \forall w,z \in [n]$}
\MYSTATE{Initialize $DONE\_MATRIX[w][z] \leftarrow 0, \forall w,z \in [n]$}
\FOR{$y=n$ to $1$}
\FOR{each $\chi_c \in \{\chi_1, \ldots ,\chi_{8k}$ \}}
\MYSTATE{Compute $X(v_y,\mathcal{C}_i^{v_y = \chi_c})$ /*as described in steps (a) and (b) below */\\
\hspace{.2in} (a) Initialize $X(v_y,\mathcal{C}_i^{v_y = \chi_c}) \leftarrow \emptyset$ \\
\hspace{.2in} (b) $\forall v_x \in BNN_i[v_y]$, if $FNC[x][c] = 0$, then \\
\hspace{.2in} SET $X(v_y,\mathcal{C}_i^{v_y = \chi_c}) \leftarrow 
X(v_y,\mathcal{C}_i^{v_y = \chi_c}) \cup \{v_x\}$}
 
\MYSTATE{Compute $Y(v_y,\mathcal{C}_i^{v_y = \chi_c})$ /*as described in steps (a) and (b) below */\\
\hspace{.2in} (a) Initialize $Y(v_y,\mathcal{C}_i^{v_y = \chi_c}) \leftarrow \emptyset$ \\
\hspace{.2in} (b) $\forall v_z \in FNN_i[v_y]$, if $\left( HOPE\_MATRIX[y][z]=1 \right)$ and  \\
\hspace{.2in} $\left( \mathcal{C}_i^{v_y = \chi_c}(v_z) \neq \chi_c \right)$, then 
 SET $Y(v_y,\mathcal{C}_i^{v_y = \chi_c}) \leftarrow Y(v_y,\mathcal{C}_i^{v_y = \chi_c}) 
\cup \{v_z\}$}
\MYSTATE{Compute $Z(v_y,\mathcal{C}_i^{v_y = \chi_c})$ /*as described in steps (a) and (b) below */\\
\hspace{.2in} (a) Initialize $Z(v_y,\mathcal{C}_i^{v_y = \chi_c}) \leftarrow \emptyset$ \\
\hspace{.2in} (b) $\forall v_z \in FNN_i[v_y]$, if $ HOPE\_MATRIX[y][z]=1 $, \\
\hspace{.2in} then SET $Z(v_y,\mathcal{C}_i^{v_y = \chi_c}) \leftarrow Z(v_y,\mathcal{C}_i^{v_y = \chi_c}) 
\cup \{v_z\}$} 
\IF{$|X(v_y,\mathcal{C}_i^{v_y = \chi_c})| \geq \frac{3}{4}|BNN_i[v_y]|$ and 
$|Y(v_y,\mathcal{C}_i^{v_y = \chi_c})| \geq \frac{3}{4}|Z(v_y,\mathcal{C}_i^{v_y = \chi_c})|$} 
\MYSTATE{SET $\mathcal{C}_i^{v_y} \leftarrow \mathcal{C}_i^{v_y= \chi_c}$ (i.e. 
SET $\mathcal{C}_i(v_y) \leftarrow \chi_c$).} 
\MYSTATE{SET $X(v_y,\mathcal{C}_i^{v_y}) \leftarrow  X(v_y,\mathcal{C}_i^{v_y= \chi_c})$}
\MYSTATE{SET $Y(v_y,\mathcal{C}_i^{v_y}) \leftarrow  Y(v_y,\mathcal{C}_i^{v_y= \chi_c})$}
\MYSTATE{SET $Z(v_y,\mathcal{C}_i^{v_y}) \leftarrow  Z(v_y,\mathcal{C}_i^{v_y= \chi_c})$}

\MYSTATE{Update $FNC$ matrix. /* as described in step (a) below */ \\
\hspace{.2in} (a) $\forall v_x \in N_G^b(v_y)$, SET $FNC[x][c] \leftarrow 1$  
}
\MYSTATE{Update $HOPE\_MATRIX$ /* as described in step (a) below */ \\
\hspace{.2in} (a) $\forall v_x \in X(v_y,\mathcal{C}_i^{v_y})$, SET 
$HOPE\_MATRIX[x][y] \leftarrow 1$
}
\MYSTATE{Update $DONE\_MATRIX$ /* as described in step (a) below */ \\
\hspace{.2in} (a) $\forall v_z \in Y(v_y,\mathcal{C}_i^{v_y})$, SET 
$DONE\_MATRIX[y][z] \leftarrow 1$
}
\MYSTATE{BREAK.}
\ENDIF
\ENDFOR
\ENDFOR
\FOR{$y=1$ to $n$}
\MYSTATE{Compute $W(v_y,\mathcal{C}_i)$ /*as described in steps (a) and (b) below */\\
\hspace{.2in} (a) Initialize $W(v_y,\mathcal{C}_i) \leftarrow \emptyset$ \\
\hspace{.2in} (b) $\forall v_x \in BNN_i[v_y]$, if $DONE\_MATRIX[x][y] = 1$, then \\
\hspace{.2in}  SET $W(v_y,\mathcal{C}_i) \leftarrow 
W(v_y,\mathcal{C}_i) \cup \{v_x\}$
}
\MYSTATE{SET $Y(v_y,\mathcal{C}_i) \leftarrow Y(v_y,\mathcal{C}_i^{v_1})$}
\ENDFOR
\MYSTATE{Return $\mathcal{C}_i$. }
\end{algorithmic}
\end{algorithm}

\subsubsection{Running Time Analysis}
\begin{lemma}
\label{functionRunTimeLemma}
 The procedure CONSTRUCT\_COLORING($i$) can be implemented to run in 
$O(k\overline{m}_i + kn)$ time, where $\overline{m}_i = \Sigma_{y \in [n]}|FNN_i[v_y]|$. 
\end{lemma}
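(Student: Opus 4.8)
The plan is to walk through CONSTRUCT\_COLORING($i$) (the detailed version) block by block, bound the cost of each block, and add the contributions, using two accounting identities throughout: $\sum_{y\in[n]}|BNN_i[v_y]| = \sum_{y\in[n]}|FNN_i[v_y]| = \overline{m}_i$ (each not-yet-DONE non-edge $(v_x,v_y)$ with $v_x<_{\mathcal{D}}v_y$ is counted once in $BNN_i[v_y]$ and once in $FNN_i[v_x]$), and $\sum_{y\in[n]}|N_G^b(v_y)| = |E(G)| = m \le kn$, the last inequality holding because $G$ is $k$-degenerate (so $\sum_y|N_G^f(v_y)|=m$ with each $|N_G^f(v_y)|\le k$). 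Every lookup into $FNC$, $HOPE\_MATRIX$, $DONE\_MATRIX$, and the colour array is $O(1)$, so the running time is governed by how many times each block touches a tracked non-edge or a backward neighbour.

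First I would analyse the main double loop (over $v_y$ from $v_n$ down to $v_1$, and over the at most $8k$ colours $\chi_c$, where Lemma \ref{ratiolemma} guarantees a good colour is found). For fixed $v_y$ and fixed $\chi_c$, computing $X(v_y,\mathcal{C}_i^{v_y=\chi_c})$ scans $BNN_i[v_y]$ with one $FNC$ lookup per element, costing $O(|BNN_i[v_y]|)$; computing $Y(v_y,\mathcal{C}_i^{v_y=\chi_c})$ scans $FNN_i[v_y]$ for $O(|FNN_i[v_y]|)$ (the colour of each forward non-neighbour $v_z$ is available in $O(1)$, since $z>y$ is already coloured). The set $Z(v_y,\mathcal{C}_i)$ is colour-independent, as already observed in the proof of Lemma \ref{ratiolemma}, so it is computed once per $v_y$ in $O(|FNN_i[v_y]|)$. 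Hence the work charged to $v_y$ across all colours is $O(k(|BNN_i[v_y]|+|FNN_i[v_y]|))$. The update block executed once, after the BREAK, costs $O(|N_G^b(v_y)|)$ for $FNC$ and $O(|BNN_i[v_y]|+|FNN_i[v_y]|)$ for the $HOPE$ and $DONE$ updates (since $X(v_y,\cdot)\subseteq BNN_i[v_y]$ and $Y(v_y,\cdot)\subseteq FNN_i[v_y]$). The closing loop computing every $W(v_y,\mathcal{C}_i)$ scans each $BNN_i[v_y]$ once, for $O(\overline{m}_i)$ total. Summing over $y$ and invoking the two identities, the colour loop contributes $O(k\,\overline{m}_i)$, the $FNC$ updates contribute $O(m)=O(kn)$, and the remaining updates and the $W$-loop contribute $O(\overline{m}_i)$.

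The step I expect to be the real obstacle is the initialisation of $HOPE\_MATRIX$ and $DONE\_MATRIX$: naively zeroing two $n\times n$ arrays costs $\Theta(n^2)$, which would swamp the claimed bound whenever $\overline{m}_i \ll n^2$. The key observation that resolves this is that both matrices are only ever read or written at index pairs $(a,b)$ with $a<_{\mathcal{D}}b$ and $(v_a,v_b)$ a not-yet-DONE non-edge -- equivalently $v_a\in BNN_i[v_b]$, equivalently $v_b\in FNN_i[v_a]$ -- and there are exactly $\overline{m}_i$ such pairs, enumerable directly from the lists $FNN_i[\cdot]$. I would therefore store these two matrices sparsely (keyed on the tracked non-edges, or using the standard lazy-array technique so that uninitialised cells read as $0$ in $O(1)$), making their initialisation and all accesses cost $O(\overline{m}_i)$ rather than $O(n^2)$. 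The matrix $FNC$ has dimensions $n\times 8k$ and is genuinely initialised in full, contributing $O(kn)$. Collecting the $O(k\overline{m}_i)$ from the colour loop, the $O(kn)$ from $FNC$ (both its initialisation and its updates), and the $O(\overline{m}_i)$ from the remaining bookkeeping, the total running time is $O(k\overline{m}_i+kn)$, as claimed.
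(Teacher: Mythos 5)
Your proof follows essentially the same route as the paper's: the same $FNC$ / $HOPE\_MATRIX$ / $DONE\_MATRIX$ bookkeeping, the same per-block cost bounds $O(|BNN_i[v_y]|+|FNN_i[v_y]|)$ and $O(|N_G^b(v_y)|)$, and the same two accounting identities $\sum_y |BNN_i[v_y]| = \sum_y |FNN_i[v_y]| = \overline{m}_i$ and $\sum_y |N_G^b(v_y)| = m \leq kn$. The one place you go beyond the paper is the initialisation of the two $n \times n$ matrices: the paper's proof silently assumes all entries start at $0$, which naively costs $\Theta(n^2)$ per invocation and would break the claimed bound when $\overline{m}_i \ll n^2$; your sparse/lazy-array fix (touching only the $\overline{m}_i$ tracked non-edge cells) is exactly what is needed to make the stated $O(k\overline{m}_i + kn)$ bound literally true, so this is a welcome repair rather than a deviation.
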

\begin{proof}
A detailed description of the procedure is given in Algorithm 4.4. 
To implement the procedure efficiently, we make use of an $(n \times 8k)$ 
$0$-$1$ matrix, hereafter called $FNC$ (Forward Neighbor Color), and two 
$(n\times n)$ $0$-$1$ matrices named $HOPE\_MATRIX$ and $DONE\_MATRIX$ 
respectively. At the beginning of the procedure each of these matrices have all 
entries set to 0. As the procedure progresses, we change some of the entries to 1 
in such a way that, \\
$\forall w \in [n], j \in [8k], FNC[w][j] = 1 \iff \exists v_z \in N_G^f(v_w) 
\mbox{ such that } v_z 
\mbox{ is already } \\
\mbox{colored by the procedure with color } 
\chi_j. \\
\forall w,z \in [n], v_w \in BNN_i[v_z], HOPE\_MATRIX[w][z] = 1  \iff  S_{wz} 
\mbox{ is already} \\
 \mbox{ favorably colored by the procedure.} \\
\forall w,z \in [n], v_w \in BNN_i[v_z], DONE\_MATRIX[w][z] = 1 \iff T_{wz} 
\mbox{ is already} \\
 \mbox{ favorably colored by the procedure.} 
$
 
In order for the above matrices to satisfy their respective properties, the only thing 
that needs to be done is to update these matrices at each stage of the procedure. 
Consider the stage at which the procedure is extending partial coloring 
$\mathcal{C}_i^{v_y + 1}$ to $\mathcal{C}_i^{v_y}$ by assigning color 
$\chi_c$ to $v_y$. At this stage, the matrices $FNC$, $HOPE\_MATRIX$ and $DONE\_MATRIX$ 
are updated as described in steps $11 (a)$, $12 (a)$ and $13 (a)$ respectively.  
Note that this can be done in $O(|BNN_i[v_y]| + |FNN_i[v_y]| + |N_G^b(v_y)|)$ time. 
Steps 4(a)-(b), 5(a)-(b) and 6(a)-(b) compute 
$X(v_y,\mathcal{C}_i^{v_y = \chi_c}), Y(v_y,\mathcal{C}_i^{v_y = \chi_c})$ and 
$Z(v_y,\mathcal{C}_i^{v_y = \chi_c})$ respectively in 
$O(|BNN_i[v_y]| + |FNN_i[v_y]|)$ time. Computing $W(v_y, \mathcal{C}_i)$ is done 
in step 15 (a)-(b) in $O(|BNN_i[v_y]|)$ time. 

Since steps 4 to 14, in the worst case, are run for each $v_y \in V(G), 
\chi_c \in \{\chi_1, \ldots , \chi_{8k}\}$, the procedure runs in  
$O(k(\Sigma_{y \in [n]}(|BNN_i[v_y]| + |FNN_i[v_y]|) + \Sigma_{y \in [n]}|N_G^b(v_y)|))$ time.  
We know that $\Sigma_{y \in [n]}(|BNN_i[v_y]| + |FNN_i[v_y]|) = 2 \overline{m}_i$ and 
$\Sigma_{y \in [n]}|N_G^b(v_y)| = m \leq kn$. Hence the Lemma.
\hfill \qed
\end{proof}

\begin{theorem}
 CONSTRUCT\_CUB\_REP($G$) runs in $O(n^2k)$ time. 
\end{theorem}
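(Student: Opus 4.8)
The plan is to account separately for the three phases of CONSTRUCT\_CUB\_REP($G$): the initialization of the lists $BNN_1$ and $FNN_1$, the main WHILE loop, and the concluding call to CONSTRUCT\_UNIT\_INTERVAL\_GRAPHS. First I would observe that building $BNN_1[v_y]$ and $FNN_1[v_y]$ for every $v_y$ amounts to scanning, for each vertex, its predecessors and successors in $\mathcal{D}$ and testing adjacency; with an adjacency matrix this is $O(n)$ per vertex and hence $O(n^2)$ overall, which already lies within the target bound.

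The heart of the argument is bounding the WHILE loop. By Theorem \ref{AlgoTheorem} the loop executes $\alpha = \lceil 2.42\log n\rceil + 1 = O(\log n)$ times. In iteration $i$ the call to CONSTRUCT\_COLORING($i$) costs $O(k\overline{m}_i + kn)$ by Lemma \ref{functionRunTimeLemma}, and the subsequent FOR loop (Steps 7--8) that forms $BNN_{i+1}$ and $FNN_{i+1}$ by deleting $W(v_y,\mathcal{C}_i)$ and $Y(v_y,\mathcal{C}_i)$ runs in time proportional to $\sum_y(|BNN_i[v_y]| + |FNN_i[v_y]|) = O(\overline{m}_i)$. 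Thus the total cost of the loop is $O\bigl(k\sum_{i=1}^{\alpha}\overline{m}_i + \alpha k n\bigr)$.

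The key step --- and the only place where anything nontrivial happens --- is to show that $\sum_{i}\overline{m}_i = O(n^2)$ rather than the naive $O(\alpha n^2) = O(n^2\log n)$. This is exactly where Lemma \ref{mbarlemma} is used: since $\overline{m}_{i+1} \leq \tfrac{7}{16}\overline{m}_i$, the quantities $\overline{m}_i$ decay geometrically, so $\sum_{i=1}^{\alpha}\overline{m}_i \leq \overline{m}_1\sum_{i\geq 0}(7/16)^i = \tfrac{16}{9}\overline{m}_1 = O(n^2)$, using $\overline{m}_1 \leq n^2$. Feeding this back in, the WHILE loop costs $O(kn^2) + O(\alpha k n) = O(kn^2 + kn\log n) = O(kn^2)$.

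Finally I would bound CONSTRUCT\_UNIT\_INTERVAL\_GRAPHS: its nested loops perform $O(1)$ work per triple plus $O(|N_G^b(v_y)|)$ work in the backward-neighbor loop, giving $O(\alpha(n + m))$; since $G$ is $k$-degenerate we have $m \leq kn$, so this phase costs $O(\alpha k n) = O(kn\log n)$, which is dominated. Adding the three phases yields the claimed $O(n^2 k)$ bound. The main obstacle is not any single computation but recognizing that the geometric shrinking guaranteed by Lemma \ref{mbarlemma} collapses the sum over the $\Theta(\log n)$ iterations into a single $O(n^2)$ term, thereby saving the $\log n$ factor that a term-by-term estimate would incur.
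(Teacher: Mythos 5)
Your proof is correct and follows essentially the same route as the paper's: per-iteration cost $O(k\overline{m}_i + kn)$ from Lemma \ref{functionRunTimeLemma}, the geometric decay $\overline{m}_{i+1} \leq \frac{7}{16}\overline{m}_i$ of Lemma \ref{mbarlemma} to collapse $\sum_{i=1}^{\alpha}\overline{m}_i$ to $O(\overline{m}) = O(n^2)$, and an $O(nk\log n)$ bound on CONSTRUCT\_UNIT\_INTERVAL\_GRAPHS. Your write-up is slightly more explicit than the paper's (accounting separately for the initialization of $BNN_1$, $FNN_1$ and the list-update steps), but the key idea and the accounting are identical.
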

\begin{proof}
The algorithm invokes the function CONSTRUCT\_COLORING($i$) $\alpha$ \\ times 
to construct colorings $\mathcal{C}_1, \ldots , \mathcal{C}_{\alpha}$ of $V(G)$. 
By Lemma \ref{functionRunTimeLemma}, to construct these $\alpha$ colorings it requires 
$O(\Sigma_{i=1}^{\alpha} (\overline{m}_ik) + \alpha kn)$ time. From Lemma \ref{mbarlemma}, 
we get that  $\Sigma_{i=1}^{\alpha} (\overline{m}_i)$ is $O(\overline{m})$. Since 
$\alpha = (\lceil 2.42 \log n \rceil + 1)$, the running time of the while loop in CONSTRUCT\_CUB\_REP($G$) is 
$O(\overline{m}k + nk\log n )$. It is easy to see that the procedure 
CONSTRUCT\_UNIT\_INTERVAL\_GRAPHS() runs in $O(nk \log n)$ time. 
Since $ \overline{m} \leq n^2$, 
CONSTRUCT\_CUB\_REP($G$) runs in $O(n^2k)$ time. 
 \hfill \bbox
\end{proof}

\section{Boxicity, Cubicity, and Crossing Number}
\label{CrossingSection}
Crossing number of a graph $G$, denoted as $CR(G)$, is the minimum number of crossing pairs of edges, 
over all drawings of $G$ in the plane. 
A graph $G$ is planar if and only if $CR(G) = 0$. Determination of the crossing number is 
an NP-complete problem. 

The following theorem is due to Pach and T{\'o}th \cite{PachToth}
\begin{theorem}
\label{PachTheorem}
 For a graph $G$ with $n$ vertices and $m \geq 7.5n$ edges, 
$CR(G) \geq \frac{1}{33.75}\frac{m^3}{n^2}$, and this 
estimate is tight up to a constant factor. 
\end{theorem}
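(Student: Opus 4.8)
The plan is to recognize this statement as the (improved) \emph{Crossing Lemma} and to prove it in two stages: first a \emph{linear} lower bound on $CR(G)$, and then a probabilistic amplification that boosts the linear bound to the cubic bound $m^3/n^2$. The crude starting point is Euler's formula: a simple planar graph on $n \geq 3$ vertices has at most $3n-6$ edges, so deleting one edge per crossing from any optimal drawing of $G$ must leave a planar graph, giving $CR(G) \geq m - 3n + 6$. This bound alone, fed into the amplification below, only yields the weaker constant $1/64$. To reach $1/33.75$ I would instead establish the sharper linear inequality $CR(G) \geq 5m - 25(n-2)$. This follows from the Pach--T{\'o}th estimates on graphs drawn with few crossings per edge: if $G$ admits a drawing in which every edge is crossed at most $i$ times, then $e(G) \leq (i+3)(n-2)$ for $i \in \{0,1,2\}$, together with the refined bounds $e(G) \leq 5.5(n-2)$ for $i=3$ and $e(G) \leq 6(n-2)$ for $i=4$. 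Deleting from an optimal drawing the edges that carry the most crossings, level by level, and combining these per-level estimates by a weighted telescoping count, yields the desired clean linear bound.

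For the amplification, I would fix an optimal drawing of $G$ and pass to a random induced subgraph $G_p$ obtained by retaining each vertex independently with probability $p$. By linearity of expectation the induced subdrawing has, in expectation, $pn$ vertices, $p^2 m$ edges, and $p^4 CR(G)$ crossings (each crossing survives iff its four endpoints survive). Since $CR(G_p)$ is at most the number of crossings surviving in this subdrawing, applying the linear bound to $G_p$ and taking expectations gives $p^4\, CR(G) \geq 5 p^2 m - 25 p (n-2)$, i.e. $CR(G) \geq 5 m p^{-2} - 25(n-2) p^{-3}$. Optimizing the right-hand side over $p$ produces the choice $p = 7.5(n-2)/m \approx 7.5 n/m$; this is exactly where the hypothesis $m \geq 7.5 n$ enters, since it is precisely what guarantees $p \leq 1$. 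Substituting this value collapses the two terms to $\frac{m^3}{n^2}\bigl(\frac{1}{11.25}-\frac{1}{16.875}\bigr) = \frac{4}{135}\frac{m^3}{n^2} = \frac{1}{33.75}\frac{m^3}{n^2}$, which is the claimed estimate.

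For tightness up to a constant factor I would exhibit a family achieving $CR = \Theta(m^3/n^2)$. The complete graph $K_v$ already does this in the dense regime, since $CR(K_v) = \Theta(v^4)$ while $m^3/n^2 = \Theta((v^2/2)^3/v^2) = \Theta(v^4)$. To cover all edge densities, take a disjoint union of $n/s$ copies of $K_s$ with $s = \lceil 2m/n \rceil$: this graph has $\Theta(ns) = \Theta(m)$ edges and total crossing number $\Theta\bigl(\frac{n}{s}\,s^4\bigr) = \Theta(n s^3) = \Theta(m^3/n^2)$, matching the lower bound up to a constant.

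I expect the main obstacle to be the refined linear bound $CR(G) \geq 5m - 25(n-2)$ rather than the amplification. The amplification is a routine first-moment computation once the linear bound is available, and the Euler-formula version of the linear bound is immediate; the difficulty lies entirely in the $i = 3$ and $i = 4$ per-edge-crossing estimates, whose proofs require a careful Euler-formula and discharging analysis of the planarized drawing (treating crossings as degree-four vertices and charging faces appropriately) rather than a one-line counting argument. It is precisely these two refined cases that are responsible for improving the Crossing Lemma constant from $1/64$ to the stated $1/33.75$.
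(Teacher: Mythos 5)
There is nothing in the paper to compare against here: Theorem \ref{PachTheorem} is stated as a quoted result of Pach and T\'oth \cite{PachToth} and is never proved in the paper; it is used only as a black box to derive Claims \ref{crossingdavclaim} and \ref{crossingdegclaim}. What you have written is therefore a reconstruction of the proof from the literature, and its skeleton is the right one: a linear bound of the form $CR(G) \ge 5m - 25(n-2)$, obtained by repeatedly deleting the most-crossed edge from an optimal drawing using upper bounds on the number of edges of graphs drawable with at most $i$ crossings per edge, followed by random-vertex-sampling amplification. Your optimization and arithmetic check out: $p = 7.5n/m$ is admissible exactly when $m \ge 7.5n$, and $\frac{1}{11.25} - \frac{1}{16.875} = \frac{4}{135} = \frac{1}{33.75}$. (One small point you leave implicit: the claim that a crossing survives with probability $p^4$ requires that every crossing involve four distinct vertices, i.e., that in an optimal drawing adjacent edges never cross.) The tightness example, disjoint unions of cliques $K_s$ with $s = \lceil 2m/n\rceil$ together with $CR(K_s) = \Theta(s^4)$, is also correct and covers the whole range $m \ge 7.5n$.

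The one substantive error is in which per-level bounds you invoke and what you claim they accomplish. To obtain $5m - 25(n-2)$, and hence the constant $1/33.75$, one needs only Pach and T\'oth's own estimates $e(G) \le (i+3)(n-2)$ for every $i \in \{0,1,2,3,4\}$, i.e., $6(n-2)$ for $i=3$ and $7(n-2)$ for $i=4$; the level-by-level deletion then gives exactly $5\left(m - 7(n-2)\right) + (4+3+2+1)(n-2) = 5m - 25(n-2)$. The bounds you cite instead, $5.5(n-2)$ for $i=3$ (Pach--Radoi\v{c}i\'{c}--Tardos--T\'oth) and $6(n-2)$ for $i=4$ (Ackerman), are later and substantially harder theorems. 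Since they are true, your derivation is not logically broken --- with them the telescoping actually yields the stronger bound $5m - 23.5(n-2)$ --- but they are the source of the subsequent improvements of the crossing-lemma constant to roughly $1/31.1$ and $1/29$, not of the improvement from $1/64$ to $1/33.75$, contrary to what your closing paragraph asserts. Finally, as you yourself concede, all of the $i \ge 1$ per-edge-crossing bounds are cited rather than proved, and they constitute the entire technical content of the theorem; so as a self-contained proof the proposal is incomplete, though as an outline pointing at the correct external lemmas it is sound.
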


The following claims follow from the above theorem.
\begin{claim}
\label{crossingdavclaim}
 For a graph $G$ on $n$ vertices and $m$ edges, if $CR(G) \leq t$, then 
$d_{av}(G) \leq 2 (\frac{33.75t}{n})^{1/3} + 15$. 
\end{claim}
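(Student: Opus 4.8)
The plan is to derive the bound directly from Theorem \ref{PachTheorem} by splitting into two cases according to whether the hypothesis $m \geq 7.5n$ of that theorem is met. The additive constant $15$ appearing in the statement is precisely what is needed to absorb the regime where the theorem does not apply, so the case split is the natural structure for the argument.

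First I would dispose of the sparse case $m < 7.5n$. Here $d_{av}(G) = \frac{2m}{n} < 15$, and since the term $2\left(\frac{33.75t}{n}\right)^{1/3}$ is nonnegative, the claimed inequality $d_{av}(G) \leq 2\left(\frac{33.75t}{n}\right)^{1/3} + 15$ holds trivially. This is the only place the constant $15$ is used.

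In the main case $m \geq 7.5n$, Theorem \ref{PachTheorem} applies and, combined with the hypothesis $CR(G) \leq t$, gives $t \geq CR(G) \geq \frac{1}{33.75}\frac{m^3}{n^2}$. Rearranging yields $m^3 \leq 33.75\, t\, n^2$, hence $m \leq (33.75\, t\, n^2)^{1/3}$. Dividing through, $d_{av}(G) = \frac{2m}{n} \leq \frac{2}{n}(33.75\, t\, n^2)^{1/3} = 2\left(\frac{33.75 t}{n}\right)^{1/3}$, which is in fact stronger than the claimed bound (the $+15$ is not needed in this regime).

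There is essentially no genuine obstacle here beyond the bookkeeping of merging the two cases into a single uniform inequality valid for all $G$: the cube-root term controls the dense regime while the constant $15$ covers the sparse regime, and together they establish the stated bound for every graph $G$ with $CR(G) \leq t$. The only point worth stating carefully is that the threshold $m = 7.5n$ corresponds exactly to $d_{av} = 15$, which is what makes the two estimates dovetail cleanly.
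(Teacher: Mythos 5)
Your proposal is correct and matches the paper's own proof exactly: the same case split on $m < 7.5n$ versus $m \geq 7.5n$, with the constant $15$ absorbing the sparse case and the Pach--T\'oth bound $m^3 \leq 33.75\, t\, n^2$ giving the cube-root term in the dense case. Nothing is missing; your writeup is just a more detailed version of the paper's two-line argument.
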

\begin{proof}
If $m < 7.5n$, then $d_{av}(G) < 15$. Otherwise, we have 
$m \leq (33.75 n^2 t)^{1/3}$ implying that 
$d_{av}(G) \leq 2 (\frac{33.75t}{n})^{1/3}$. 
\hfill \qed 
\end{proof}
\begin{claim}
\label{crossingdegclaim}
For a graph $G$ on $n$ vertices and $m$ edges, if $CR(G) = t$, then $G$ is $\left(6.5  t^{1/4} + 15\right)$-degenerate.  
\end{claim}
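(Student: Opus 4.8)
The plan is to combine the degeneracy bound with the crossing number lower bound from Theorem \ref{PachTheorem}. The key observation is that degeneracy of a graph is controlled by the minimum degree across all of its subgraphs: if every subgraph $H$ of $G$ has a vertex of degree at most $d$, then $G$ is $d$-degenerate (just repeatedly remove such a minimum-degree vertex to produce a degeneracy order). Equivalently, it suffices to bound the minimum degree $\delta(H)$ of every subgraph $H$ of $G$, and the worst such bound gives the degeneracy.

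First I would fix an arbitrary subgraph $H$ of $G$ on $n'$ vertices and $m'$ edges. Since $H$ is a subgraph of $G$, we have $CR(H) \leq CR(G) = t$. Now I want to apply Claim \ref{crossingdavclaim} (or equivalently Theorem \ref{PachTheorem}) to $H$ to bound its average degree, and hence its minimum degree. The subtlety is that the crossing number bound is stated in terms of $n$ (the number of vertices of the graph it is applied to), so applying it to $H$ gives a bound involving $n'$, the vertex count of $H$, not $n$. The cleanest route is to observe $\delta(H) \leq d_{av}(H) \leq 2\left(\frac{33.75\, t}{n'}\right)^{1/3} + 15$ by Claim \ref{crossingdavclaim} applied to $H$ (using $CR(H)\le t$).

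The main obstacle is that this minimum-degree bound depends on $n'$, so I must show it is maximized over all subgraph sizes in a way that yields a clean degeneracy bound of the form $6.5\,t^{1/4}+15$. The trick is that the term $\left(\frac{t}{n'}\right)^{1/3}$ grows as $n'$ shrinks, so it looks worst for small subgraphs; but a subgraph on $n'$ vertices trivially has minimum degree at most $n'-1$. So the true bound on $\delta(H)$ is $\min\left\{n'-1,\ 2\left(\frac{33.75\, t}{n'}\right)^{1/3}+15\right\}$, and I want to show this quantity, maximized over $n'$, is at most $6.5\,t^{1/4}+15$. Balancing the two competing bounds, the crossover happens roughly when $n'-1 \approx 2(33.75\,t/n')^{1/3}$, i.e. when $n' \approx (\text{const}\cdot t)^{1/4}$, which is exactly the source of the $t^{1/4}$ exponent. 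I would carry out this optimization: set $n'-1$ equal to (or bound it against) the average-degree expression, solve for the worst-case $n'$ of order $t^{1/4}$, and verify the constant $6.5$ absorbs the arithmetic ($33.75^{1/3}$ and the factor $2$).

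Concretely, the argument concludes that every subgraph $H$ of $G$ has $\delta(H) \leq 6.5\,t^{1/4} + 15$: for subgraphs with $n' \leq 6.5\,t^{1/4}+16$ this follows from the trivial bound $\delta(H)\le n'-1$, while for larger subgraphs it follows from the average-degree bound above, since then $\left(\frac{33.75\,t}{n'}\right)^{1/3}$ is small enough that $2\left(\frac{33.75\,t}{n'}\right)^{1/3} \leq 6.5\,t^{1/4}$. Since every subgraph has a vertex of degree at most $6.5\,t^{1/4}+15$, repeatedly deleting such vertices yields a degeneracy order witnessing that $G$ is $\left(6.5\,t^{1/4}+15\right)$-degenerate, which is exactly the claim.
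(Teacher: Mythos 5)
Your proof is correct, and it leans on the same key ingredient as the paper's proof, namely Claim \ref{crossingdavclaim} combined with the monotonicity of crossing number under taking subgraphs; the difference lies in how the dependence on the vertex count is eliminated. The paper's proof is a two-line computation: since $t = CR(G) \leq \binom{m}{2} \leq n^4$, one has $n \geq t^{1/4}$, and substituting this into Claim \ref{crossingdavclaim} gives $d_{av}(G) \leq 2\left(33.75\,t/t^{1/4}\right)^{1/3} + 15 = 2\cdot 33.75^{1/3}\, t^{1/4} + 15 \leq 6.5\, t^{1/4} + 15$, from which degeneracy is concluded. You instead split on the subgraph size $n'$: the trivial bound $\delta(H) \leq n'-1$ when $n'$ is at most roughly $6.5t^{1/4}$, and the average-degree bound when $n'$ is larger; your constants check out (for $n' \geq 6.5t^{1/4}$ one gets $2(33.75t/n')^{1/3} \leq 3.5\,t^{1/4}$, comfortably below $6.5\,t^{1/4}$). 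Two remarks on the comparison. First, the paper's device is slicker: the inequality $n \geq CR(G)^{1/4}$ holds for \emph{every} graph, so no case analysis or balancing over $n'$ is needed; your optimization rediscovers the same $t^{1/4}$ threshold by hand. Second, your version is more careful at the one step the paper glosses over: bounding $d_{av}(G)$ alone does not bound degeneracy, since degeneracy requires a minimum-degree (or average-degree) bound for every subgraph. The paper's chain does extend to each subgraph $H$ verbatim, because $CR(H) \leq t$ and $n' \geq CR(H)^{1/4}$, giving $d_{av}(H) \leq 6.5\,CR(H)^{1/4} + 15 \leq 6.5\,t^{1/4}+15$, but this quantification over subgraphs is left implicit in the paper, whereas you make it explicit. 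So your route costs a little extra arithmetic but is self-contained and rigorous precisely where the paper is terse.
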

\begin{proof}
From the definition of crossing number we know that $CR(G) \leq {m \choose 2} \leq n^4$. Hence, $n \geq t^{1/4}$. Then by Claim \ref{crossingdavclaim}, $d_{av}(G) \leq 6.5  t^{1/4} + 15$. Thus $G$ is $\left(6.5  t^{1/4} + 15\right)$-degenerate. 
\hfill \qed  
\end{proof}

\begin{lemma}
\label{S_BC_Blemma}
Consider a graph $G$ whose vertices are partitioned into two parts namely $V_A$ and $V_B$. That is, $V(G) = V_A \uplus V_B$. 
Then,  $\boxi(C_B(G)) \leq 2 \boxi(S_B(G))$. 
\end{lemma}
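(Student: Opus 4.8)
The plan is to start from a minimum-dimension box representation of $S_B(G)$ and transform it, replacing each dimension by two dimensions, into a box representation of $C_B(G)$. Let $b = \boxi(S_B(G))$. By Lemma~\ref{robertslem} there exist interval graphs $I_1,\ldots,I_b$ with $S_B(G) = I_1 \cap \cdots \cap I_b$; fix an interval representation $f_\ell$ of each $I_\ell$. I would first record the only structural difference between the two graphs: $E(C_B(G)) = E(S_B(G)) \cup \{(u,v) : u,v \in V_B\}$, so the non-edges of $C_B(G)$ are exactly the non-edges of $S_B(G)$ having at least one endpoint in $V_A$. Hence it suffices to produce $2b$ interval graphs that (i) make $V_B$ a clique, and (ii) preserve, for every pair meeting $V_A$, its adjacency status in $S_B(G)$, so that their intersection is $C_B(G)$.

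For each $\ell$ I would build two interval graphs $J_\ell$ and $K_\ell$ from $f_\ell$. Choose a number $M_\ell$ larger than every right end-point and a number $M_\ell'$ smaller than every left end-point occurring in $f_\ell$. Leave the $V_A$ intervals untouched in both graphs. In $J_\ell$ extend each $V_B$ interval to the right, assigning $v \in V_B$ the interval $[l(f_\ell(v)), M_\ell]$; in $K_\ell$ extend each $V_B$ interval to the left, assigning it $[M_\ell', r(f_\ell(v))]$. Since all stretched intervals in $J_\ell$ contain the common point $M_\ell$ (and likewise all stretched intervals in $K_\ell$ contain $M_\ell'$), $V_B$ becomes a clique in both graphs, which takes care of requirement (i).

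The crucial verification is requirement (ii). For $u,v \in V_A$ nothing is altered, so such a pair is adjacent in $J_\ell$ (and in $K_\ell$) precisely when it is in $I_\ell$. For a cross pair $u \in V_A$, $v \in V_B$, a short computation shows that $u$ and $v$ are adjacent in $J_\ell$ iff $l(f_\ell(v)) \le r(f_\ell(u))$ and adjacent in $K_\ell$ iff $l(f_\ell(u)) \le r(f_\ell(v))$; the conjunction of these two one-sided conditions is exactly $f_\ell(u) \cap f_\ell(v) \neq \emptyset$, i.e.\ adjacency in $I_\ell$. Thus in $J_\ell \cap K_\ell$ every pair meeting $V_A$ has the same status as in $I_\ell$. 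Intersecting over all $\ell$, a pair meeting $V_A$ is adjacent in $\bigcap_{\ell=1}^{b}(J_\ell \cap K_\ell)$ iff it is adjacent in $\bigcap_{\ell=1}^{b} I_\ell = S_B(G)$, which agrees with $C_B(G)$ on such pairs, while every $V_B$ pair is adjacent throughout. Hence $C_B(G) = \bigcap_{\ell=1}^{b}(J_\ell \cap K_\ell)$, and Lemma~\ref{robertslem} yields $\boxi(C_B(G)) \le 2b = 2\boxi(S_B(G))$.

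I expect the main obstacle to be precisely the cross-pair bookkeeping: extending the $V_B$ intervals in a single direction inevitably creates spurious edges to $V_A$ vertices lying on the wrong side, and the entire argument hinges on the observation that the rightward and leftward extensions fail in complementary directions, so that taking their intersection restores the original two-sided intersection condition. I would also be careful to check that the extensions neither create nor destroy adjacencies among $V_A$ vertices (they do not, since those intervals are kept intact) and that $M_\ell, M_\ell'$ are chosen separately in each dimension so that the $V_B$ intervals genuinely share a common point there.
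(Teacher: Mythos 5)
Your proof is correct and takes essentially the same route as the paper: split each interval graph of $S_B(G)$ into two copies, stretch the $V_B$ intervals rightward to a common point in one copy and leftward in the other, and observe that the two one-sided adjacency conditions conjoin to recover the original intersection condition for pairs meeting $V_A$. The only (immaterial) difference is that the paper stretches only to $\min_{v\in V_B} l(f_i(v))$ and $\max_{v\in V_B} r(f_i(v))$ rather than past all endpoints, and it leaves the verification to the reader, which you carry out explicitly.
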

\begin{proof}
Proof of this lemma is very similar to the proof of 
Lemma $3$ in \cite{RogSunSiv} and hence we only give a brief 
outline of it here. 
Assume $\boxi(S_B(G)) = r$. Then by Lemma 
\ref{robertslem}, there exist $r$ interval graphs $I_1, \ldots , I_r$ such that 
$S_B(G) = I_1 \cap I_2 \cap \cdots \cap I_r$. For each $i \in [r]$, let 
$f_i$ denote an interval representation of $I_i$. From these $r$ interval 
graphs we construct $2r$ interval graphs $I_1', \ldots ,I_r', 
I_1'', \ldots ,I_r''$ as outlined below. 
Let $f_i'$, $f_i''$ denote interval representations of $I_i'$ and $I_i''$ 
respectively, where $i\in [r]$. 
\begin{eqnarray*}
\mbox{Construction of } f_i': \\
\forall u\in V_A,~f_i'(u)& = & f_i(u). \\
\forall u\in V_B,~f_i'(u)& = & [\min_{v\in V_B}(l(f_i(v))),r(f_i(u))]. \\
\mbox{Construction of } f_i'': \\
\forall u\in A,~f_i''(u)& = & f_i(u). \\
\forall u\in B,~f_i''(u)& = & [l(f_i(u)), \max_{v\in V_B}(r(f_i(v)))]. \\
\end{eqnarray*}
We leave it to the reader to verify that $C_B(G) = \bigcap_{i=1}^{r}(I_i' \cap I_i'')$. 
\hfill \qed 
\end{proof}

\begin{lemma}
\label{usefullemma}
Consider a graph $G$. Let vertices of $G$ be partitioned into two parts namely $V_A$ and $V_B$. That is, $V(G) = V_A \uplus V_B$. 
Then, 
$\boxi(G) \leq 2\boxi(S_B(G)) + \boxi(G_B)$. 
\end{lemma}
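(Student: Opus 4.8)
The plan is to realize $G$ as the intersection of two graphs on the vertex set $V(G)$, and then invoke the subadditivity of boxicity under graph intersection together with Lemma \ref{S_BC_Blemma}. Recall that, by Lemma \ref{robertslem}, if $G = G_1 \cap G_2$ then $\boxi(G) \leq \boxi(G_1) + \boxi(G_2)$, since superimposing an interval representation of $G_1$ (using $\boxi(G_1)$ interval graphs) and one of $G_2$ (using $\boxi(G_2)$ interval graphs) yields a box representation of $G_1 \cap G_2$. So it suffices to write $G = C_B(G) \cap H$ for a suitably chosen auxiliary graph $H$ with $\boxi(H) = \boxi(G_B)$: then $\boxi(G) \leq \boxi(C_B(G)) + \boxi(H) \leq 2\boxi(S_B(G)) + \boxi(G_B)$, where the last inequality is exactly Lemma \ref{S_BC_Blemma}.

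First I would define $H$ to be the graph on $V(G)$ in which $V_A$ is a clique, every vertex of $V_A$ is adjacent to every vertex of $V_B$, and the subgraph induced on $V_B$ equals $G_B$. The verification that $G = C_B(G) \cap H$ is a routine case analysis over the three types of vertex pairs. For a pair inside $V_A$ or a pair with one endpoint in each of $V_A, V_B$, the pair is always an edge of $H$, so the intersection with $C_B(G)$ leaves exactly the edges of $G$ on such pairs (here $C_B(G)$ agrees with $G$). For a pair inside $V_B$, the pair is always an edge of $C_B(G)$ (as $V_B$ is a clique there), so the intersection with $H$ leaves exactly the edges of $G_B$, which are precisely the edges of $G$ inside $V_B$. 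Hence $E(C_B(G)) \cap E(H) = E(G)$.

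The only genuinely substantive step is establishing $\boxi(H) = \boxi(G_B)$, and this is where I expect the main (though mild) obstacle to lie. The key observation is that every vertex of $V_A$ is \emph{universal} in $H$, i.e.\ adjacent to all other vertices. Since $G_B$ is an induced subgraph of $H$, we immediately have $\boxi(H) \geq \boxi(G_B)$. For the reverse inequality, take an optimal box representation of $G_B$ in $\boxi(G_B)$ dimensions and extend it to $H$ by assigning each vertex of $V_A$, in every coordinate, an interval long enough to contain all intervals used for $V_B$; such a box meets every other box, so the added vertices become universal while the representation restricted to $V_B$ is unchanged. This gives $\boxi(H) \leq \boxi(G_B)$, and hence equality. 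The degenerate case $\boxi(G_B)=0$ (that is, $G_B$ complete) must be noted separately: then $H$ is complete, so $\boxi(H)=0=\boxi(G_B)$, and the argument still goes through. Combining the three ingredients yields $\boxi(G) \leq 2\boxi(S_B(G)) + \boxi(G_B)$, as required.
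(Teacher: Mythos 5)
Your proposal is correct and follows essentially the same route as the paper: your auxiliary graph $H$ is exactly the paper's $G'$ (each vertex of $V_A$ made universal), and both proofs combine the decomposition $G = C_B(G) \cap H$, subadditivity of boxicity from Lemma \ref{robertslem}, Lemma \ref{S_BC_Blemma}, and the fact that adding universal vertices does not increase boxicity. The only difference is cosmetic: you prove the equality $\boxi(H) = \boxi(G_B)$, whereas the paper only needs (and states) the inequality $\boxi(G') \leq \boxi(G_B)$.
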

\begin{proof}
Let $G'$ be the graph with $V(G') = V(G)$ and 
$E(G') = E(G) \cup \{(u,v)~|~u \in V_A, v \in V(G')\}$. That is, each $u \in V_A$ is made a universal vertex in $G'$. Observe that  
$G = C_B(G) \cap G'.$ Then by Lemma \ref{robertslem}, we have $\boxi(G) \leq \boxi(C_B(G)) + \boxi(G')$. Applying Lemma \ref{S_BC_Blemma}, we get 
\begin{eqnarray}
\label{boxofGineq}
\boxi(G) & \leq & 2\boxi(S_B(G)) + \boxi(G')
\end{eqnarray}
\begin{claim}
\label{claim2}
 $\boxi(G') \leq \boxi(G_B)$. 
\end{claim}
Clearly, $G'$ is obtained from $G_B$ by adding universal vertices one after the other. 
Since adding a universal vertex to a graph does not increase its boxicity, 
$\boxi(G') \leq \boxi(G_B)$. 

Combining Inequality \ref{boxofGineq} and Claim \ref{claim2}, we get
$\boxi(G) \leq 2\boxi(S_B(G)) + \boxi(G_B)$. 
\hfill \qed
\end{proof}

\subsection{Boxicity and Crossing Number}
\begin{theorem}
\label{boxCrossingThm}
 For a graph $G$ with $CR(G) = t$, $\boxi(G) \leq 
66\cdot t^{\frac{1}{4}}{\lceil\log 4t\rceil}^{\frac{3}{4}} + 6$. 
\end{theorem}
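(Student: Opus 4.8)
The plan is to isolate the few high-degree vertices of $G$ and treat them separately from the sparse, bounded-degeneracy remainder. I would fix a degree threshold $\theta$ (to be optimised at the end) and set $V_B=\{v\in V(G):d_G(v)>\theta\}$ and $V_A=V(G)\setminus V_B$. Since $\sum_v d_G(v)=2m$, this forces $|V_B|<2m/\theta$. Applying Lemma~\ref{usefullemma} to this partition gives $\boxi(G)\le 2\boxi(S_B(G))+\boxi(G_B)$, which lets the two pieces be bounded by different techniques.

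For the sparse piece $S_B(G)$: every edge of $S_B(G)$ has an endpoint in $V_A$, and each vertex of $V_A$ has degree at most $\theta$, so ordering $V_A$ first and the (now independent) set $V_B$ last exhibits $S_B(G)$ as $\theta$-degenerate. Theorem~\ref{probabilisticTheorem} (through $\boxi\le\cubi$) then yields $\boxi(S_B(G))\le(\theta+2)\lceil 2e\log n\rceil$. For the dense piece $G_B$, which lives on at most $2m/\theta$ vertices and still satisfies $CR(G_B)\le t$, I would bound $\boxi(G_B)$ either by Roberts' bound $\lfloor|V_B|/2\rfloor\le m/\theta$ (Lemma~\ref{robertslem}) or, when $|V_B|$ is large, again through its own degeneracy $O(t^{1/4})$ from Claim~\ref{crossingdegclaim} together with a $\log|V_B|$ factor.

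To turn these into a bound depending only on $t$, I would invoke Theorem~\ref{PachTheorem} and Claim~\ref{crossingdavclaim} to control $m$ (hence $m/\theta$) and Claim~\ref{crossingdegclaim} for the degeneracy, and split into the cases $n\le 4t$ and $n>4t$. In the first case $\log n\le\lceil\log 4t\rceil$ directly; in the second the average degree $d_{av}\le 2(33.75t/n)^{1/3}+15$ is so small that the graph is essentially a bounded-degeneracy graph on a sparse skeleton, a regime I would argue contributes only to the additive constant (the ``$+6$''). Optimising $\theta$ to balance the $\theta\lceil\log 4t\rceil$ term against the $G_B$ term is what produces the exponents $t^{1/4}$ and $(\log t)^{3/4}$, with the crossing-lemma exponent $\tfrac13$ feeding into $m$ as the arithmetic source of the fourth powers.

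The main obstacle is precisely this last balancing. A careless choice of bound for $G_B$ is fatal: Roberts' bound alone gives only $O(t^{1/2}(\log t)^{1/2})$, while the degeneracy/cubicity bound alone gives $O(t^{1/4}\log n)$ (which is essentially the cubicity estimate $O(\log n+t^{1/4}\log t)$ and still carries a full $\log n$). Extracting the sharper, $n$-free estimate $O(t^{1/4}(\log t)^{3/4})$ demands a boxicity-specific treatment of the sparse part that saves a $(\log t)^{1/4}$ factor over cubicity, together with the exact choice of $\theta$ and careful bookkeeping of the constants $66$ and $6$. I expect the most delicate step to be verifying that the large-$n$ regime really does collapse into the additive constant, uniformly in $n$.
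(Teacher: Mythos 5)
Your proposal has the right skeleton (a two--part split via Lemma~\ref{usefullemma}, a degeneracy bound on the sparse part, something cruder on the dense part, then balancing), but it is missing the two ideas that actually make the theorem work, and the difficulties you flag at the end are not technicalities --- they are exactly where your approach breaks. First, the $\log n$ problem: your sparse part $S_B(G)$ still lives on all $n$ vertices, so Theorem~\ref{probabilisticTheorem} gives $\boxi(S_B(G)) \leq (\theta+2)\lceil 2e\log n\rceil$, which diverges with $n$ for every fixed $\theta \geq 1$; no choice of $\theta$ removes this. Your hope that the regime $n>4t$ ``collapses into the additive constant'' is false: a graph on $n \gg 4t$ vertices with crossing number $t$ can still contain a dense crossing core on $\Theta(t)$ vertices (plus many low--degree vertices), so its boxicity can be as large as $\Theta(t^{1/4})$ while your upper bound $\theta\log n$ grows without limit. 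The paper kills $\log n$ by a \emph{crossing-based}, not degree-based, first split: fix an optimal drawing, let $V_B$ be the vertices participating in some crossing, and note that $S_B(G)$ is then \emph{planar}, so $\boxi(S_B(G)) \leq 3$ with no logarithmic factor at all (this is the source of the ``$+6$''), while the induced graph $H=G_B$ has at most $4t$ vertices, so every subsequent logarithm is $\log 4t$ rather than $\log n$.

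Second, even granting $n \leq 4t$, your raw degree threshold cannot reach the exponent $3/4$. From $|V_B| < 2m/\theta$ you get only linear savings in $\theta$: with Roberts' bound this balances to $O(t^{1/2}(\log t)^{1/2})$, and with the degeneracy/cubicity bound on $G_B$ to $O(t^{1/4}\log t)$ --- both of which you concede. The missing idea is to threshold along an \emph{iterated minimum-degree (degeneracy) ordering} of $H$: cut at the first vertex $v_x$ whose degree in the remaining induced subgraph exceeds $k$. Then the core $H_D$ has \emph{minimum} degree $> k$, hence $d_{av}(H_D) > k$, and Claim~\ref{crossingdavclaim} (i.e.\ Pach--T\'oth) forces $|V(H_D)| = O(t/k^3)$ --- a cubic, not linear, shrinkage. (A raw degree threshold cannot do this, because degrees drop when you pass to the induced subgraph $G_B$, so you cannot assert $d_{av}(G_B) > \theta$.) Applying Roberts' $n/2$ bound to this small core and balancing $k\lceil\log 4t\rceil$ against $t/k^3$ at $k \approx (t/\lceil\log 4t\rceil)^{1/4}$ gives $66\, t^{1/4}\lceil\log 4t\rceil^{3/4}$. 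In particular, the $(\log t)^{1/4}$ saving you were looking for does not come from any ``boxicity-specific treatment of the sparse part'' (the sparse part is handled through cubicity in the paper as well); it comes from the cubic shrinkage of the dense core combined with Roberts' bound.
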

\begin{proof}
 Consider a drawing $P$ of $G$ with $t$ crossings. We say a vertex 
$v$ \emph{participates} in a given crossing in $P$, if at least one of the 
edges of the given crossing is incident on $v$. 

Partition the vertices of $G$ into two parts, namely $V_A$ and $V_B$, such that 
$V_B = \{v \in V(G)~|~v \mbox{ participates in some crossing in $P$} \}$ and 
$V_A = V(G) \setminus V_B$. 
Then by Lemma \ref{usefullemma}, 
\[\boxi(G) \leq 2\boxi(S_B(G)) + \boxi(G_B).\] 
Observe that $S_B(G)$ is a planar 
graph and hence its boxicity is at most 3 (see \cite{Thoma1}). 
Therefore, $\boxi(G) \leq 6 + \boxi(G_B)$.  
For ease of notation, let $H \equiv G_B$. Then, 
\begin{eqnarray}
\label{crossingineq}
\boxi(G) \leq 6 + \boxi(H). 
\end{eqnarray}
We have $CR(H) = CR(G) = t$. Let $n = |V(H)|$ and 
$m=|E(H)|$. At most 4 vertices participate in a given crossing. 
Since each vertex in $H$ participates 
in some crossing in $P$, we get 
\[n \leq 4t.\] 

Let $V(H) = \{v_1, \ldots , v_n\}$. 
Let $v_1, \ldots , v_n$ be an ordering of the vertices of $H$, such that 
for each $i \in [n]$, $d_{H_i}(v_i) \leq d_{H_i}(v), \forall v \in V(H_i)$, 
where $H_i$ denotes the subgraph of $H$ induced on vertex set 
$\{v_i, \ldots , v_n\}$.
Let $k = \left(\frac{33.75}{3}\right)^{\frac{1}{4}}\left(\frac{t}{\lceil \log 4t \rceil}\right)^{\frac{1}{4}}$. Let 
$x = \min (\{i \in [n]~|~d_{H_i}(v_i) > k\})$. 
Partition $V(H)$ into two parts, namely $V_C = \{v_1, \ldots, v_{x-1}\}$ and 
$V_D = \{v_x, \ldots , v_n\}$. 
Then by Lemma \ref{usefullemma}, 
\[\boxi(H) \leq 2\boxi(S_D(H)) + \boxi(H_D).\]
Note that $S_D(H)$ is 
$k$-degenerate. If $k=1$, then $S_D(H)$ is a forest and hence its boxicity is at most 2. 
Suppose $k>1$. Then by Theorem \ref{probabilisticTheorem}, 
$\boxi(S_D(H)) \leq \cubi(S_D(H)) \leq (k+2)\lceil2e \log n \rceil \leq 12k \lceil \log (4t) \rceil \leq 
12 \left(\frac{33.75}{3}\right)^{\frac{1}{4}} t^{\frac{1}{4}} {\lceil \log 4t \rceil}^{\frac{3}{4}}$. 
Thus we have, 
\begin{eqnarray}
\label{ineq1}
 \boxi(H) \leq 24 \left(\frac{33.75}{3}\right)^{\frac{1}{4}} t^{\frac{1}{4}} {\lceil \log 4t \rceil}^{\frac{3}{4}}
 + \boxi(H_D).
\end{eqnarray}
Since $H_D \equiv H_x$, $v_x$ is a minimum degree vertex of $H_D$. 
Therefore, 
$d_{av}(H_D) > d_{H_D}(v_x) > k$. Then by Claim \ref{crossingdavclaim}, we have 
\[k=\left(\frac{33.75}{3}\right)^{\frac{1}{4}}\left(\frac{t}{\lceil \log 4t 
\rceil}\right)^{\frac{1}{4}} < d_{av}(H_D) \leq 2 \left(\frac{33.75t}{|V(H_D)|}\right)^{1/3} + 15.\]
From this, we get $|V(H_D)| \leq 48^{\frac{3}{4}} (33.75t)^{\frac{1}{4}} {\lceil \log 4t \rceil}^{\frac{3}{4}}$. 
Since boxicity of a graph is at most half the number of its vertices\cite{Roberts} , we get 
$\boxi(H_D) \leq \frac{48^{\frac{3}{4}} (33.75t)^{\frac{1}{4}} {\lceil \log 4t \rceil}^{\frac{3}{4}}}{2}$. 
Substituting this in Inequality \ref{ineq1}, we get 
\[\boxi(H) \leq 66 t^{\frac{1}{4}}{\lceil\log 4t\rceil}^{\frac{3}{4}}
\]
Therefore from Inequality \ref{crossingineq} ,we get 
\[\boxi(G) \leq 66t^{\frac{1}{4}}{\lceil\log 4t\rceil}^{\frac{3}{4}} + 6.\]

\hfill \bbox
\end{proof}
\subsubsection{Tightness of Theorem \ref{boxCrossingThm}: }
Let $\mathbb{P}(n,p)$ be the probability space of height-2 posets with $n$ minimal elements forming set $A$ and $n$ maximal elements forming set $B$, where for any $a \in A$ and $b \in B$, $Pr[a<b] = p$. 
Erd\H{o}s, Kierstead, and Trotter in \cite{erdosKierstead} proved that 
when $p=\frac{1}{\log n}$, for almost all posets $\mathcal{P} \in \mathbb{P}(n,p)$, $\Delta(G_{\mathcal{P}}) < \frac{\delta_1 n}{\log n}$ and $dim(\mathcal{P}) > \delta_2 n$, where $\delta_1$ and $\delta_2$ are some positive constants. Then by Theorem $1$ in \cite{DiptAdiga}, $\boxi(G_{\mathcal{P}}) \geq  \frac{dim(\mathcal{P})}{2} \geq \frac{\delta_2 n}{2}$. 

Let $t = CR(G_{\mathcal{P}})$ and let $m$ denote the number of edges of $G_{\mathcal{P}}$. It follows from definition of crossing number that $t \leq {m \choose 2} \leq m^2 \leq (n\Delta(G_{\mathcal{P}}))^2 \leq (\frac{\delta_1 n^2}{\log n})^2 \leq \frac{\delta_1^2 n^4}{(\log n)^2}$. Since $t \leq m^2 \leq n^4$, we have $n \geq t^{1/4}$ and thereby $\log n \geq \frac{1}{4}\log t$. Thus, $t \leq \frac{\delta_1^2 n^4}{(\frac{1}{4} \log t)^2} = \frac{16\delta_1^2 n^4}{(\log t)^2}$. From Theorem \ref{boxCrossingThm}, we have $\boxi(G_{\mathcal{P}}) \leq ct^{1/4}(\log t)^{3/4} + d \leq cn(\log t)^{1/4} + d$, where $c$ and $d$ are some constants. Therefore, the bound given by Theorem \ref{boxCrossingThm} is tight up to a factor of $O((\log t)^{\frac{1}{4}})$.

\subsection{Cubicity and Crossing Number}
\begin{theorem}
For a graph $G$ with $CR(G) = t$, $\cubi(G) \leq 6 \log_2n + \left(6.5  t^{1/4} + 17\right)$ $\lceil 2e\log (4t)\rceil$.  
\end{theorem}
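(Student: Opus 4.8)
The plan is to reuse the vertex partition and the product decomposition from the proof of Theorem~\ref{boxCrossingThm}, but to carry it out entirely with cubes so that each of the two factors is bounded by the cubicity tools of Section~\ref{CubandDegeneracy}. Fix a drawing $P$ of $G$ with $t$ crossings and partition $V(G) = V_A \uplus V_B$, where $V_B$ is the set of vertices participating in some crossing and $V_A$ is the rest. First I would invoke the identity $G = C_B(G) \cap G'$ established inside Lemma~\ref{usefullemma}, where $C_B(G)$ is $G$ with $V_B$ turned into a clique and $G'$ is $G$ with every vertex of $V_A$ made universal. Since cubicity is subadditive under graph intersection (an immediate consequence of Lemma~\ref{lemrobertscub}, exactly as Lemma~\ref{robertslem} gives subadditivity for boxicity: the union of the two unit-interval families represents the intersection), this yields $\cubi(G) \le \cubi(C_B(G)) + \cubi(G')$, and it remains to bound the two summands.

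For the first summand I would argue as in Theorem~\ref{boxCrossingThm}: the two edges of any crossing have all of their endpoints in $V_B$, so deleting the edges inside $V_B$ destroys every crossing and $S_B(G)$ is planar, whence $\boxi(S_B(G)) \le 3$ by Thomassen~\cite{Thoma1}. Lemma~\ref{S_BC_Blemma} then gives $\boxi(C_B(G)) \le 2\boxi(S_B(G)) \le 6$. Converting this box representation into a cube representation via the ratio bound $\cubi(F) \le \lceil\log_2 n\rceil\,\boxi(F)$ of \cite{CubBox}, applied to $F = C_B(G)$ on $n$ vertices, produces $\cubi(C_B(G)) \le 6\lceil\log_2 n\rceil$, matching the $6\log_2 n$ term of the statement.

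The second summand is where the crossing-number savings must be harvested. Let $H = G_B$ be the subgraph induced on $V_B$; then $G'$ is precisely $H$ together with the universal set $V_A$, i.e. $G'$ has exactly the form handled by Corollary~\ref{ProbabilisticThmCor}. Since each crossing contributes at most four endpoints to $V_B$, we have $|V(H)| = |V_B| \le 4t$, and since $H$ is an induced subgraph of $G$ it inherits $CR(H) \le t$, so Claim~\ref{crossingdegclaim} makes $H$ a $(6.5\,t^{1/4}+15)$-degenerate graph. Applying Corollary~\ref{ProbabilisticThmCor} with base graph $H$ and universal set $V_A$ gives $\cubi(G') \le (6.5\,t^{1/4}+15+2)\,\lceil 2e\log|V(H)|\rceil \le (6.5\,t^{1/4}+17)\,\lceil 2e\log(4t)\rceil$. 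Adding the two bounds yields the claimed estimate.

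The one step that deserves care, and where the entire improvement lives, is the treatment of $G'$. The graph $G'$ carries all $n$ vertices of $G$ and, because of the universal set $V_A$, has large degeneracy, so a direct appeal to Theorem~\ref{probabilisticTheorem} would be useless and would reintroduce a $\log n$ factor multiplied by a large degeneracy. The role of Corollary~\ref{ProbabilisticThmCor} is precisely that universal vertices can be appended to a cube representation \emph{for free}: the cubicity is then controlled only by the degeneracy of the core graph $H$ and by $\log|V(H)| = \log(4t)$, not by $n$. Holding the logarithm at scale $\log(4t)$ rather than $\log n$ in this term is the delicate point; everything else is routine bookkeeping of the constants.
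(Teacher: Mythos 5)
Your proposal is correct and follows essentially the same route as the paper's own proof: the same partition into crossing/non-crossing vertices, the same factorization $G = C_B(G)\cap G'$, the same planarity-plus-Lemma~\ref{S_BC_Blemma}-plus-\cite{CubBox} treatment of $C_B(G)$, and the same application of Claim~\ref{crossingdegclaim} together with Corollary~\ref{ProbabilisticThmCor} to bound $\cubi(G')$ by $\left(6.5\,t^{1/4}+17\right)\lceil 2e\log(4t)\rceil$. The only cosmetic difference is that you quote the cubicity-to-boxicity ratio of \cite{CubBox} with a ceiling, giving $6\lceil\log_2 n\rceil$ where the paper writes $6\log_2 n$.
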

\begin{proof}
Consider a drawing $P$ of $G$ with $t$ crossings. 
As in Theorem \ref{boxCrossingThm}, partition the vertices of $G$ into two parts, namely $V_A$ and $V_B$, such that 
$V_B = \{v \in V(G)~|~v \mbox{ participates in some crossing in $P$} \}$ and 
$V_A = V(G) \setminus V_B$. 
Let $G'$ be the graph with $V(G') = V(G)$ and 
$E(G') = E(G) \cup \{(u,v)~|~u \in V_A, v \in V(G')\}$. That is, each $u \in V_A$ is made a universal vertex in $G'$. Observe that $G = C_B(G) \cap G'.$ 
Then by Lemma \ref{lemrobertscub}, 
$$ \cubi(G) \leq \cubi(C_B(G)) + \cubi(G')$$
It is shown in \cite{CubBox} that cubicity of a graph is at most $\log_2n$ times its boxicity. Applying this result, we get
\begin{eqnarray*}
 \cubi(G) & \leq & (\log_2n) \boxi(C_B(G)) + \cubi(G')  \\
& \leq & (2\log_2n) \boxi(S_B(G)) + \cubi(G') \mbox{ (by Lemma \ref{S_BC_Blemma})} 
\end{eqnarray*}
Observe that $S_B(G)$ is a planar 
graph and hence its boxicity is at most 3 (see \cite{Thoma1}). 
Therefore, 
\begin{eqnarray}
\label{cubCrossingIneq}
\cubi(G) \leq 6\log_2n + \cubi(G') 
\end{eqnarray}
Observe that $G'$ is the graph with $V(G') = V(G_B) \uplus V_A$ and $E(G') = E(G_B) \cup \{(u,v)~|~u \in V_A, v \in V(G')\}$. 
Since $CR(G_B) = CR(G) = t$, by Claim  \ref{crossingdegclaim}, $G_B$ is $\left(6.5  t^{1/4} + 15\right)$-degenerate. Then by Corollary \ref{ProbabilisticThmCor}, $\cubi(G') \leq \left(6.5  t^{1/4} + 17\right)$ $\lceil 2e\log (|V_B|)\rceil $. 
We know that at most 4 vertices participate in a given crossing. 
Since each vertex in $G_B$ participates in some crossing in $P$, we get $|V_B| \leq 4t.$ 
Thus,  $\cubi(G') \leq \left(6.5  t^{1/4} + 17\right) \lceil 2e\log (4t)\rceil $. Substituting for $\cubi(G')$ in Inequality (\ref{cubCrossingIneq}), we get 
$$ \cubi(G) \leq 6 \log_2n + \left(6.5  t^{1/4} + 17\right) \lceil 2e\log (4t)\rceil. $$
\hfill \bbox
\end{proof}

\section{Cubicity of Random Graphs}\label{CubicityRandGraph}
Given $n$ and $m$, in order to prove that almost all graphs in $\mathcal{G}(n,m)$ model have cubicity $O(\frac{2m}{n}\log n)$, we first show that cubicity of almost all graphs in $\mathcal{G}(n,p)$ model, where $p = \left(\frac{2m}{n}\right)\frac{1}{n-1} = \frac{m}{{n \choose 2}}$,  is $O(\frac{2m}{n}\log n)$. 
We then use a result in \cite{bollobas} to convert the result for graphs in $\mathcal{G}(n,p)$ model to  those in $\mathcal{G}(n,m)$ model. To show that almost all graphs in $\mathcal{G}(n,p)$ model have cubicity $O(\frac{2m}{n} \log n)$, we prove the following lemma. Then by Theorem \ref{probabilisticTheorem}, the desired result follows. 

\begin{lemma}
\label{randDegeneracyLemma}
For a random graph $G \in \mathcal{G}(n,p)$, where $p = \frac{c}{n-1}$ and $1 \leq c \leq n-1$, $Pr[G \mbox{ is } 4ec \mbox{-degenerate} ] \geq 1 - \frac{1}{\Omega(n^2)}$.  
\end{lemma}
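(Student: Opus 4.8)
The plan is to show that a random graph $G \in \mathcal{G}(n,p)$ with $p = \frac{c}{n-1}$ is $4ec$-degenerate with high probability. Recall that $G$ is $d$-degenerate if and only if every nonempty subgraph of $G$ has a vertex of degree at most $d$; equivalently, $G$ fails to be $d$-degenerate exactly when there exists a nonempty induced subgraph in which every vertex has degree strictly greater than $d$. So the first step is to set $d = 4ec$ and bound the probability that some induced subgraph on a vertex set $S$ has minimum degree exceeding $d$.

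First I would fix a subset $S \subseteq V(G)$ with $|S| = s$ and estimate the probability that the subgraph induced on $S$ has minimum degree $> d$. A clean sufficient event to rule out is that $S$ contains more than $\frac{ds}{2}$ edges: if every one of the $s$ vertices has induced degree $> d$, then the number of edges inside $S$ is at least $\frac{ds}{2}$. Thus I would bound $\Pr[e(S) \geq \frac{ds}{2}]$ by a union/tail estimate. The number of potential edges inside $S$ is $\binom{s}{2}$, each present independently with probability $p$, so $\Pr[e(S) \ge \tfrac{ds}{2}] \le \binom{\binom{s}{2}}{ds/2} p^{ds/2}$. Using the standard inequality $\binom{N}{r} \le \left(\frac{eN}{r}\right)^r$ with $N = \binom{s}{2} \le \frac{s^2}{2}$ and $r = \frac{ds}{2}$, the per-set bound becomes roughly $\left(\frac{e \cdot s^2/2}{ds/2}\cdot p\right)^{ds/2} = \left(\frac{es p}{d}\right)^{ds/2}$.

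Next I would take a union bound over all choices of $S$ of size $s$, and then over all $s$. The number of sets of size $s$ is $\binom{n}{s} \le \left(\frac{en}{s}\right)^s$. Substituting $p = \frac{c}{n-1} \le \frac{2c}{n}$ (for $n \ge 2$) and $d = 4ec$, the key simplification is that $\frac{esp}{d} = \frac{es}{4ec}\cdot p \le \frac{s}{4c}\cdot \frac{2c}{n} = \frac{s}{2n} \le \tfrac{1}{2}$. Raising this to the power $ds/2 = 2ecs$ and multiplying by $\left(\frac{en}{s}\right)^s$, I expect the dominant exponential factor $\left(\tfrac{1}{2}\right)^{2ecs}$ to crush the polynomial $\left(\frac{en}{s}\right)^s$ term because the constant $4e$ in the degeneracy bound was chosen precisely so that the entropy term $\left(\frac{en}{s}\right)^s$ is dominated; I would verify that each summand is at most $n^{-\beta s}$ for some $\beta > 2$, so that summing a geometric-type series over $s \ge 1$ yields a total failure probability of $O(n^{-2})$, giving the claimed $1 - \frac{1}{\Omega(n^2)}$.

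The main obstacle will be the bookkeeping for small $s$: when $s$ is a small constant the "entropy" factor $\left(\frac{en}{s}\right)^s$ is polynomially large in $n$, so I must confirm the exponent $2ecs$ in the $(1/2)^{2ecs}$ factor is large enough to beat it even at $s$ as small as $1$ or $2$ — here the hypothesis $c \ge 1$ is essential, since it guarantees $2ec \ge 2e$ is bounded below by a constant large enough to overpower $\frac{en}{s}$ after taking the $s$-th root. I would therefore treat the whole range $1 \le s \le n$ uniformly by showing each term is at most $n^{-2}$ times a summable sequence, and handle the edge case $c$ close to $n-1$ (where $d = 4ec$ may exceed $n$, making the statement trivially true) separately. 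Modulo these routine tail computations, the union bound closes and $G$ is $4ec$-degenerate with probability $1 - \frac{1}{\Omega(n^2)}$.
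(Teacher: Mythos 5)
Your proposal is correct, and it follows the same overall skeleton as the paper's proof --- a union bound over all vertex subsets $S$, driven by a tail bound on the number of edges inside $S$ --- but the technical engine is genuinely different and more elementary. The paper bounds $\Pr[e(S)\geq 2e|S|c]$ via a Chernoff bound (with $\delta=\frac{4e}{\alpha}-1$, $\alpha=|S|/n$) and then controls the number of subsets through the binary entropy estimate $\binom{n}{n\alpha}\leq 2^{nH(\alpha)}$, which forces a two-case analysis ($\alpha\geq\frac14$ versus $\alpha<\frac14$); it also phrases degeneracy through average degree of induced subgraphs. You instead use the first-moment estimate $\Pr[\mathrm{Bin}(N,p)\geq r]\leq\binom{N}{r}p^r\leq\left(\frac{eNp}{r}\right)^r$ together with the minimum-degree characterization of degeneracy, and the choice $d=4ec$, $p\leq\frac{2c}{n}$ collapses the per-set bound to $\left(\frac{s}{2n}\right)^{2ecs}$; this avoids both Chernoff and entropy bounds and needs no case split on $\alpha$, at the cost of nothing essential (your failure probability works out to roughly $n^{-(2e-1)/2}\approx n^{-2.2}$ versus the paper's $\approx n^{-(2e-3)}\log$-free $n^{-2.4}$, both of which are $\frac{1}{\Omega(n^2)}$). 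One small inaccuracy in your sketch: the claim that each summand $\binom{n}{s}\left(\frac{s}{2n}\right)^{2ecs}$ is at most $n^{-\beta s}$ with $\beta>2$ fails when $s=\Theta(n)$, where the summand is only $q^s$ for a constant $q\leq \frac{e}{2^{2e}}<1$; this regime is harmless since $q^s$ is then superpolynomially small, and splitting the sum at, say, $s=\sqrt{n}$ (terms with $s\leq\sqrt{n}$ are each at most $n^{-(2e-1)s/2}$, terms beyond are at most $q^{\sqrt{n}}$ each) closes the argument exactly as you anticipate.
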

\begin{proof}
In order to show that a given graph $G$ is $k$-degenerate it is enough to show that every induced subgraph of $G$ has average degree at most $k$. That is, for every $H$ which is an induced subgraph of $G$, $|E(H)| \leq \frac{|V(H)|k}{2}$. Below we prove that for almost all graphs $G \in \mathcal{G}(n,p)$, every induced subgraph $H$ of $G$ has $E(H) < |V(H)|2ec$. 

We use the following version of the Chernoff bound (refer page 64 of \cite{mitzenmacher}) in our proof
$$ Pr[X \geq (1+\delta)\mu] < \left(\frac{e^\delta}{(1+\delta)^{(1+\delta)}} \right)^{\mu} \leq \frac{1}{2^{(1+\delta) \mu \log_2 (\frac{1+\delta}{e})}}$$, where $X$ is a summation of independent Bernoulli random variables, $\mu \geq E[X]$, and $\delta$ is any positive constant. 

Let $G \in \mathcal{G}(n,p)$ be a random graph, where $p = \frac{c}{n-1}$. Let $H$ be an induced subgraph of $G$ with $|V(H)| = n\alpha$, where $ 0 < \alpha \leq 1$. Let $Y_H$ be a random variable that represents the number of edges in $H$. For any $v \in V(H)$, let $d_H(v)$ denote the degree of $v$ in $H$. Then, $E[d_H(v)] = p(n\alpha -1) = \frac{c(n\alpha -1)}{n-1} \leq \frac{c(n\alpha -\alpha)}{n-1} \leq c\alpha$ and $E[Y_H] = E[\frac{1}{2}\Sigma_{v \in V(H)}d_H(v)] = \frac{1}{2} \Sigma_{v \in V(H)}E[d_H(v)] \leq \frac{n \alpha^2 c}{2}$. Let $\delta = \frac{4e}{\alpha} - 1$ and $\mu = \frac{n\alpha^2 c}{2} \geq E[Y_H]$. Applying Chernoff bound, we get $Pr[Y_H \geq 2en\alpha c] \leq \frac{1}{2^{2en\alpha c \log_2(\frac{4}{\alpha})}}$. 
Here we split the proof into two cases: \\
\textbf{case $\frac{1}{4} \leq \alpha \leq 1$:} Then, $Pr[Y_H \geq 2en\alpha c] \leq \frac{1}{2^{(1/2)enc \log_2(\frac{4}{1})}} = \frac{1}{2^{enc}}$. Since $c \geq 1$, we get $Pr[Y_H \geq 2en\alpha c] \leq \frac{1}{2^{en}}$. Applying union bound it follows that, 
$$ Pr[\bigcup_{H: |V(H)| \geq \frac{n}{4}} Y_H \geq 2en\alpha c] \leq \frac{1}{2^{en}}\sum_{\alpha = \frac{1}{4}}^1 {n \choose n\alpha} \leq \frac{2^n}{2^{en}} = \frac{1}{2^{(e-1)n}}.$$
\\
\textbf{case $0 < \alpha < \frac{1}{4}$:} Here we use the following expression given in page $17$ of \cite{jukna}
 while taking union bound: ${n \choose n\alpha} \leq 2^{nH(\alpha)}$, where $H(\alpha) = \alpha
 \log_2 (\frac{1}{\alpha}) + (1 - \alpha) \log_2 (\frac{1}{1-\alpha})$ is the binary entropy
 function. This inequality can be proved using the Stirling's formula for factorials. Since $\alpha < \frac{1}{4}$, we have $\alpha
 \log_2 (\frac{1}{\alpha}) > (1 - \alpha) \log_2 (\frac{1}{1-\alpha})$. Therefore, $H(\alpha) \leq 2\alpha \log_2 \frac{1}{\alpha}$.
 Hence, 
\begin{eqnarray*}
Pr[\bigcup_{1 \leq n\alpha \leq \frac{n}{4}} ~\bigcup_{H: |V(H)| =
 n\alpha}  (Y_H \geq 2en\alpha c)] & \leq & \frac{n}{4} {n \choose n\alpha} \frac{1}{2^{2en\alpha
 c \log_2(\frac{4}{\alpha})}} \\ 
& \leq &  \frac{2^{\log_2(\frac{n}{4})} 2^{2n\alpha \log_2(\frac{1}{\alpha})}}{2^{2en\alpha c
 \log_2(\frac{4}{\alpha})}} \\ 
& \leq & \frac{2^{2n\alpha \log_2(\frac{1}{\alpha}) + \log_2n}}{2^{4en\alpha c + 2en\alpha c 
 \log_2(\frac{1}{\alpha})}} \\ 
& =& \frac{1}{2^{4en\alpha c + (2ec - 2)n \alpha \log_2(\frac{1}{\alpha}) - \log_2 n}} 
\end{eqnarray*}
Since $c \geq 1$ and $\alpha \geq \frac{1}{n}$, we get 
\begin{eqnarray}
\label{probRandIneq}
Pr[\bigcup_{1 \leq n\alpha \leq \frac{n}{4}} ~ \bigcup_{H:|V(H)| = n\alpha} (Y_H \geq 2en\alpha c)] & \leq & \frac{1}{2^{4e + (2ec - 2)n \alpha \log_2(\frac{1}{\alpha}) - \log_2 n}}
\end{eqnarray}
It is easy to see that the function $f(\alpha) = \alpha \log_2(\frac{1}{\alpha})$ is an increasing function, when $\alpha < \frac{1}{4}$. We have $\frac{1}{n} \leq \alpha < \frac{1}{4}$. Hence $f(\alpha) \geq f(1/n) = \frac{\log_2n}{n}$, when $\alpha < \frac{1}{4}$. Applying this to Inequality \ref{probRandIneq}, we get 
\begin{eqnarray*}
Pr[\bigcup_{1 \leq n\alpha \leq \frac{n}{4}} ~ \bigcup_{H:|V(H)| = n\alpha} (Y_H \geq 2en\alpha c)] & \leq & \frac{1}{2^{4e + (2ec - 3)\log_2 n}}
\end{eqnarray*}

Thus we say that the probability of any subgraph of $G$ to have its average degree greater than $(4ec + 1)$ is at most $\frac{1}{\Omega(n^2)}$. In other words, $G$ is $4ec$-degenerate with probability at least  $1 - \frac{1}{\Omega(n^2)}$. 
\hfill \qed
\end{proof}
\begin{theorem}
\label{cubRandGNP}
For a random graph $G \in \mathcal{G}(n,p)$, where $ p = \frac{c}{n-1}$ and $1\leq c \leq n-1$,  $Pr[\cubi(G) \notin O(c \log n)] \leq \frac{1}{\Omega(n^2)}$.  
\end{theorem}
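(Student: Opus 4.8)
The plan is to combine the two pieces of machinery already in hand: the degeneracy estimate for random graphs from Lemma \ref{randDegeneracyLemma}, and the cubicity-versus-degeneracy bound from Theorem \ref{probabilisticTheorem}. The entire argument is a conditioning step, so the proof should be short.

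First I would invoke Lemma \ref{randDegeneracyLemma} directly. Since $p = \frac{c}{n-1}$ with $1 \leq c \leq n-1$, the lemma tells us that the event $E = \{G \mbox{ is } 4ec\mbox{-degenerate}\}$ satisfies $Pr[E] \geq 1 - \frac{1}{\Omega(n^2)}$. Equivalently, $Pr[\overline{E}] \leq \frac{1}{\Omega(n^2)}$, where $\overline{E}$ is the event that $G$ fails to be $4ec$-degenerate.

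Next I would show that the event $E$ forces $\cubi(G) \in O(c\log n)$. Whenever $G$ is $4ec$-degenerate, Theorem \ref{probabilisticTheorem} (applied with $k = 4ec$) gives
\[
\cubi(G) \leq (4ec + 2)\lceil 2e \log n\rceil,
\]
and since $e$ is an absolute constant, the right-hand side is $O(c\log n)$. Hence the event $E$ is contained in the event $\{\cubi(G) \in O(c\log n)\}$, which means $\{\cubi(G) \notin O(c\log n)\} \subseteq \overline{E}$.

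Finally I would conclude by monotonicity of probability: $Pr[\cubi(G) \notin O(c\log n)] \leq Pr[\overline{E}] \leq \frac{1}{\Omega(n^2)}$, as required. There is essentially no obstacle here, because all the difficulty was absorbed into the Chernoff-bound/union-bound argument of Lemma \ref{randDegeneracyLemma}; the only point to be slightly careful about is that the implicit constant hidden in the $O(c\log n)$ notation must be allowed to depend on the absolute constant $e$ but not on $c$ or $n$, which is clear from the explicit bound $(4ec+2)\lceil 2e\log n\rceil$.
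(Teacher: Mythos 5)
Your proposal is correct and is exactly the paper's argument: the paper proves this theorem in one line by citing Theorem \ref{probabilisticTheorem} and Lemma \ref{randDegeneracyLemma}, and your write-up simply makes that conditioning step explicit. Nothing is missing.
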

\begin{proof}
Proof follows directly from Theorem \ref{probabilisticTheorem} and Lemma \ref{randDegeneracyLemma}.  
\hfill \bbox
\end{proof}
It is shown in page 35 of \cite{bollobas} that , 
$$ P_m(Q) \leq 3\sqrt{m} P_p(Q)$$ 
where Q is a property of graphs of order $n$, and $P_m(Q)$ and $P_p(Q)$ are the probabilities
 of a graph chosen at random from the $\mathcal{G}(n,m)$ or the $\mathcal{G}(n,p)$ models
 respectively to have property $Q$ given that $p = \frac{m}{{n \choose 2}} = \left(\frac{2m}{n}\right)\frac{1}{n-1}$. Note that for any connected graph $G$ with at least $2$ vertices, $\frac{2m}{n} \geq \frac{2(n-1)}{n} \geq 1$. Since we are only interested in connected graphs, we assume $\frac{2m}{n} \geq 1$. Then by Theorem \ref{cubRandGNP}, for a random graph $G \in \mathcal{G}(n,p)$, where $ p = \left(\frac{2m}{n}\right)\frac{1}{n-1}$,  $Pr[\cubi(G) \notin O(\frac{2m}{n} \log n)] \leq \frac{1}{\Omega(n^2)}$. Combining this result with the result shown in \cite{bollobas}, we say that for a random graph $G \in \mathcal{G}(n,m)$, $Pr[\cubi(G) \notin O(\frac{2m}{n} \log n)] \leq \frac{3\sqrt{m}}{\Omega(n^2)} \leq \frac{1}{\Omega(n)}$. 
Thus we have the following theorem. 

\begin{theorem}
 For a random graph $G \in \mathcal{G}(n,m)$, $Pr[\cubi(G) \in O(\frac{2m}{n} \log n)] \geq 1 -
 \frac{1}{\Omega(n)}$. 
\end{theorem}

\bibliography{mathewref}
\end{document}